\author{Beno\^it F. Sehba and Edgar Tchoundja}
\title[Duality and Hankel operators on Bergman-Orlicz]{Duality for large Bergman-Orlicz spaces and Boundedness of Hankel Operators}
\newtheorem{theorem}{T{\hskip 0pt\footnotesize\bf HEOREM}}[section]
\newtheorem{lemma}[theorem]{L{\hskip 0pt\footnotesize\bf EMMA}}
\newtheorem{proposition}[theorem]{P{\hskip 0pt\footnotesize\bf ROPOSITION}}
\newtheorem{definition}[theorem]{D{\hskip 0pt\footnotesize\bf EFINITION}}
\newtheorem{corollary}[theorem]{C{\hskip 0pt\footnotesize\bf OROLLARY}}
\newtheorem{remark}[theorem]{R{\hskip 0pt\footnotesize\bf EMARK}}
\newcommand{\bprop} {\begin{proposition}}
\newcommand{\eprop} {\end{proposition}}
\newcommand{\btheo} {\begin{theorem}}
\newcommand{\etheo} {\end{theorem}}
\newcommand{\blem} {\begin{lemma}}
\newcommand{\elem} {\end{lemma}}
\newcommand{\bcor} {\begin{corollary}}
\newcommand{\ecor} {\end{corollary}}
\newcommand{\Be}{\begin{equation}}
\newcommand{\Ee}{\end{equation}}
\newcommand{\Bea}{\begin{eqnarray}}
\newcommand{\Eea}{\end{eqnarray}}
\newcommand{\Bes}{\begin{equation*}}
\newcommand{\Ees}{\end{equation*}}
\newcommand{\Beas}{\begin{eqnarray*}}
\newcommand{\Eeas}{\end{eqnarray*}}
\newcommand{\Ba}{\begin{array}}
\newcommand{\Ea}{\end{array}}
\def\C{\mathbb{C}}
\begin{document}
\date{\today}
\address{Beno\^it Sehba, Department of Mathematics, University of Ghana, Legon\\ P. O. Box LG 62 Legon-Accra, Ghana.}
\email{bfsehba@ug.edu.gh}
\email{bsehba@gmail.com}
\address{Edgar Tchoundja, Department of Mathematics, University of Yaound\'e I, P. O. Box 812  Yaound\'e, Cameroon.}
\email{etchound@ictp.it}
\email{tchoundjaedgar@yahoo.fr}
\keywords{Hankel operator, Bergman-Orlicz spaces, Atomic decomposition, Weak factorization.}
\subjclass[2000]{Primary 47B35, Secondary 32A35, 32A37}

\begin{abstract} For $\mathbb B^n$ the unit ball of $\mathbb C^n$, we consider Bergman-Orlicz spaces of holomorphic functions in $L^\Phi_\alpha$, which are generalizations of classical Bergman spaces. We characterize the dual space of large Bergman-Orlicz space, and  bounded Hankel operators between some  Bergman-Orlicz spaces $A_\alpha^{\Phi_1}(\mathbb B^n)$ and $A_\alpha^{\Phi_2}(\mathbb B^n)$ where $\Phi_1$ and $\Phi_2$ are either convex or concave growth functions.
\end{abstract}
\maketitle

\section{Introduction}

Let  $\mathbb B^n$ be the unit ball  of $\mathbb C^n$. We denote by $d\nu$ the Lebesgue measure on $\mathbb B^n$ and $d\sigma$  the normalized measure on $\mathbb S^n=\partial{\mathbb B^n}$ (the boundary of
$\mathbb B^n$). The space $\mathcal H(\mathbb B^n)$ is the
set of holomorphic functions on $\mathbb B^n.$

For $z=(z_1,\cdots,z_n)$ and $w=(w_1,\cdots,w_n)$ in
$\C^n$, we let
$$\langle z,w\rangle =z_1\overline {w_1} +
\cdots + z_n\overline {w_n}$$
so that $|z|^2=\langle
z,z\rangle =|z_1|^2 +\cdots +|z_n|^2$.

  We say that a function $\Phi$ is a growth function if it is a continuous and non-decreasing function  from $[0,\infty)$ onto itself.

For $\alpha>-1$, we denote by $d\nu_{\alpha}$ the normalized Lebesgue measure $d\nu_{\alpha}(z)=c_{\alpha}(1-|z|^2)^{\alpha}d\nu(z)$, with $c_\alpha$ such
that $\nu_\alpha(\mathbb B^n)=1$. For $\Phi$ a growth function, the weighted Bergman-Orlicz space $\mathcal A_\alpha^{\Phi}(\mathbb B^n)$ is the space of holomorphic functions $f$ such that
$$||f||_{\alpha,\Phi}:=\int_{\mathbb B^n}\Phi(|f(z)|)d\nu_{\alpha}(z)<\infty.$$
More generally, we will also consider the Orlicz space $L_\alpha^{\Phi}(\mathbb B^n)$, that is, the space of functions $f$ such that
$$||f||_{\alpha,\Phi}:=\int_{\mathbb B^n}\Phi(|f(z)|)d\nu_{\alpha}(z)<\infty.$$
Hence $\mathcal A_\alpha^{\Phi}(\mathbb B^n)$ is the subspace
of $L_\alpha^{\Phi}(\mathbb B^n)$ consisting of holomorphic functions.

We define on $\mathcal A_\alpha^{\Phi}(\mathbb B^n)$ the following (quasi)-norm
\begin{equation}\label{BergOrdef1}
||f||^{lux}_{\alpha,\Phi}:=\inf\{\lambda>0: \int_{\mathbb B^n}\Phi\left(\frac{|f(z)|}{\lambda}\right)d\nu_{\alpha}(z)\le 1\}
\end{equation}
which is finite for $f\in \mathcal A_\alpha^{\Phi}(\mathbb B^n)$ (see \cite{sehbastevic}).

When  $\Phi(t)=t^p$, we recover the classical weighted Bergman spaces denoted by $\mathcal A_\alpha^{p}(\mathbb B^n)$   and defined by
$$\|f\|_{p,\alpha}^p=||f||_{\mathcal A_\alpha^{p}}^p:= \int_{\mathbb B^n}|f(z)|^pd\nu_{\alpha}(z)<\infty.$$
We say that a growth function $\Phi$ is of upper type  $q \geq 1$ if there exists $C>0$ such that, for $s>0$ and $t\ge 1$,
\begin{equation}\label{uppertype}
 \Phi(st)\le Ct^q\Phi(s).\end{equation}
We denote by $\mathscr{U}^q$ the set of growth functions $\Phi$ of upper type $q$, (for some $q\ge 1$), such that the function $t\mapsto \frac{\Phi(t)}{t}$ is non-decreasing.

We say that $\Phi$ is of lower type $p > 0$ if there exists $C>0$ such that, for $s>0$ and $0<t\le 1$,
\begin{equation}\label{eq:lowertype}
 \Phi(st)\le Ct^p\Phi(s).\end{equation}
We denote by $\mathscr{L}_p$ the set of growth functions $\Phi$ of lower type $p$,  (for some $p\le 1$), such that the function $t\mapsto \frac{\Phi(t)}{t}$ is non-increasing.

We say that $\Phi$ satisfies the $\Delta_2$-condition if there exists a constant $K>1$ such that, for any $t\ge 0$,
\begin{equation}\label{eq:delta2condition}
 \Phi(2t)\le K\Phi(t).\end{equation}

Recall that two growth functions $\Phi_1$ and $\Phi_2$ are said equivalent if there exists some constant $c$ such
that
$$c\Phi_1(ct) \le \Phi_2(t)\le c^{-1}\Phi_1(c^{-1}t).$$
Such equivalent growth functions define the same Orlicz space.
Note that we may always suppose that any $\Phi\in \mathscr{L}_p$ (resp. $\mathscr{U}_q$),  is concave (resp. convex) and
that $\Phi$ is a $\mathscr{C}^1$ function with derivative $\Phi^{\prime}(t)\backsimeq \frac{\Phi(t)}{t}$ (see \cite{BS} for the lower type functions).
\vskip .2cm

\vskip .2cm
Let us observe that if $\Phi$ is of upper type (resp. lower type) $p_1$, then it is of upper type (resp. lower type) $p_2$ for any $\infty >p_2>p_1$ (resp. $p_2<p_1<\infty$). Hence, when we say $\Phi\in \mathscr{U}^q$ (resp. $\Phi\in \mathscr{L}_p$), we suppose that $q$ (resp. $p$) is the smallest (resp. biggest) number $q_1$ (resp. $p_1$) such that $\Phi$ is of upper type $q_1$ (resp. lower type $p_1$).

\vskip .4cm
 We denoted by $L_\alpha^p( \mathbb
 B^n)$, $0<p<\infty$, the Lebesgue space with respect to the measure
 $d\nu_\alpha$.

The orthogonal projection of $L_\alpha^2( \mathbb
 B^n)$ onto $\mathcal {A}_\alpha^2(\mathbb B^n)$ is called the
 Bergman projection and denoted $P_\alpha$. It is given by
 \begin{equation} P_\alpha(f)(z)=\int_{\mathbb B^n}K_\alpha(z,\xi)f(\xi)d\nu_\alpha(\xi),\end{equation}
 where
$$K_\alpha(z,\xi)=\frac{1}{(1-\langle z,\xi \rangle)^{n+1+\alpha}}$$
 is  the weighted Bergman kernel on $ \mathbb B^n$. We denote as
 well by $P_\alpha$ its extension to $L_\alpha^1( \mathbb
 B^n)$.

\vskip .2cm
It is well known that the Bergman projection $P_\alpha$ is bounded on $L_\alpha^p( \mathbb
 B^n)$ for all $p\in (1,\infty)$. One of the consequences of this result is the fact that the topological dual
 space of the Bergman space $A_\alpha^p( \mathbb
 B^n)$ identifies with $A_\alpha^q( \mathbb
 B^n)$, with $\frac{1}{p}+\frac{1}{q}=1$, under the integral pairing
 \begin{equation}\label{eq:dualityA2}
 \langle f,g\rangle_\alpha:=\int_{\mathbb B^n}f(z)\overline {g(z)}d\nu_\alpha(z),
 \end{equation}
$f\in L_\alpha^p( \mathbb B^n)$, $g\in L_\alpha^q( \mathbb  B^n)$.
\vskip .2cm
The boundedness of the Bergman projection has been extended to the setting of Orlicz spaces for the class of
Young functions in \cite{DHZZ}, and this provides as a consequence that the dual space of the Bergman-Orlicz space
$A_\alpha^{\Phi}(\mathbb B^n)$ can be identifies with another Bergman-Orlicz space that we will specify in the next section.
\vskip .2cm
Our first interest in this paper is the characterization of the dual space of the Bergman-Orlicz spaces defined from the
class $\mathscr{L}_p$. We note that this class generalizes the class of power functions $\phi(t)=t^p$, $0<p<1$, and we have
that for $\Phi\in \mathscr{L}_p$,  the following inclusions hold
\begin{equation}\label{eq:doubleinclus}
\mathcal A_{\alpha}^{1}(\mathbb B^n)\subset \mathcal A_{\alpha}^{\Phi}(\mathbb B^n)\subset \mathcal A_{\alpha}^{p}(\mathbb B^n).
\end{equation}
We recall
 that given an analytic function $f$ on $\mathbb B^n$, the
 radial derivative $Rf$ of $f$ is defined by
 $$Rf(z)=\sum_{j=1}^{n}z_j \frac{\partial f}{\partial z_j}(z).$$

For $\beta\ge 0$, we denote by $\Gamma_\beta(\mathbb B^n)$
the space of holomorphic functions $f$ for which there
exists and integer $k>\beta$ and a positive constant $C$
such that
 $$ |R^kf(z)|\le C(1-|z|^2)^{\beta-k}.$$
Remark that for $\beta=0$, the class $\Gamma_\beta(\mathbb B^n)$
coincides with the usual Bloch class $\mathcal B$. The Bloch class is the space
of holomorphic functions in $\mathbb B^n$ such that
$$\sup_{z\in\mathbb B^n}|Rf(z)|(1-|z|^2) <\infty.$$
 For $\beta>0$, it
coincides with the class of Lipschitz functions of order
$\beta$.

It is known that, for $0<p\le 1$, the dual of the Bergman space $\mathcal A_{\alpha}^{p}(\mathbb B^n)$,  coincides with $\Gamma_\beta(\mathbb B^n)$ with $\beta=(n+1+\alpha)(\frac{1}{p}-1)$ under the integral pairing

 \begin{equation}\label{eq:dualitylarge}
 \lim_{r\rightarrow 1}\int_{\mathbb B^n}f(rz)\overline {g(z)}d\nu_\alpha(z)
 \end{equation}
(see \cite{KZ}).
\vskip .2cm
To $\Phi$ a growth, we associate the function $$\rho(t)=\frac{1}{t\Phi^{-1}(1/t)}.$$
The function $\rho$ is quite relevant in the study of Orlicz space of analytic functions (see \cite{BS, Janson, ST, ST1} and the references therein).  Note in particular that in the case of $A_\alpha^p(\mathbb B^n)$, $\Phi(t)=t^p$ and $\rho(t)=t^{\frac{1}{p}-1}$, hence  $f\in \Gamma_\beta(\mathbb B^n)$ can be written as $$ |R^kf(z)|\le C(1-|z|^2)^{-k}\rho\left((1-|z|^2)^{n+1+\alpha}\right).$$
From this observation, we will make the following generalization. Let $\rho$ be a positive continuous increasing function from $[0,\infty)$ onto itself. Let $\gamma>0$. We say that $\rho$ is of upper type $\gamma$ on $[0,1]$ if there exists a constant $C$ such that
\begin{eqnarray}\label{rho-uppertype}
    \rho(st)\le Cs^\gamma\rho(t),
\end{eqnarray}
for $s>1$ and $st\leq 1$. We will call a weight, a function $\rho$ which is a continuous increasing function from $[0,\infty)$ onto itself, which is of upper type $\gamma$, for some $\gamma>0$.

Now for $\alpha>-1$ and a weight $\rho$ (of upper type $\gamma$), we define  the weighted Lipschitz space $\Gamma_{\alpha,\rho}(\mathbb B^n)$ as the space of holomorphic functions $f$ in $\mathbb B^n$ such that, for some integer $k>\gamma (n+1+\alpha) $ and a positive constant $C>0$, we have
$$ |R^kf(z)|\le C(1-|z|^2)^{-k}\rho\left((1-|z|^2)^{n+1+\alpha}\right).$$
We will show that, as in the classical Lipschitz spaces,  these spaces are independent of $k$. This allows us to see $\Gamma_{\alpha,\rho}(\mathbb B^n)$ as a Banach space under the following norm
$$||f||_{\Gamma_{\alpha,\rho}(\mathbb B^n)} = |f(0)|+ \sup_{z\in\mathbb B^n}\frac{|R^k f(z)|(1-|z|^2)^{k}}{\rho\left((1-|z|^2)^{n+1+\alpha}\right)}.$$
The following is our first main result which extends the duality result for classical Bergman spaces with
small exponent to Bergman-Orlicz spaces with concave exponent.
\btheo\label{lem:dualitysmall}
Let $\alpha>-1$, $\Phi\in \mathscr{L}_p$ and $\rho(t)=\frac{1}{t\Phi^{-1}(1/t)}$. Then the topological dual space $\left(\mathcal A_{\alpha}^{\Phi}(\mathbb B^n)\right)^*$ of $\mathcal A_{\alpha}^{\Phi}(\mathbb B^n)$ identifies with $\Gamma_{\alpha, \rho}(\mathbb B^n)$ under the duality pairing
\begin{equation}\label{eq:dualpairing}
\langle f,g\rangle_\alpha:=\lim_{r\rightarrow 1}\int_{\mathbb B^n}f(rz)\overline {g(z)}d\nu_\alpha(z),
\end{equation}

where $f\in \mathcal A_{\alpha}^{\Phi}(\mathbb B^n)$ and $g\in \Gamma_{\alpha, \rho}(\mathbb B^n).$
\etheo

The proof of the above result required two main steps. First one has to insure that the above definition of the space  $\Gamma_{\alpha, \rho}(\mathbb B^n)$
does not depend on the choice of the number of derivatives. Second, one needs a nice example of functions in the Bergman-Orlicz space $\mathcal A_{\alpha}^{\Phi}(\mathbb B^n)$ and a generalization to large Bergman-Orlicz spaces of the following inequality known for Bergman spaces with exponent $0<p\le 1$ (see \cite{ZZ}),
\begin{equation}\label{eq:intestimsmall}
\int_{\mathbb B^n}|f(z)|(1-|z|^2)^{(\frac{1}{p}-1)(n+1+\alpha)}d\nu_\alpha(z)\le C\|f\|_{\alpha,p}^p.
\end{equation}
 \vskip .4cm

 For $b\in \mathcal {A}_\alpha^2(\mathbb B^n)$, the small Hankel
 operator with symbol $b$ is defined for $f$ a bounded
 holomorphic function by $h_b(f):=P_\alpha(b\overline f)$.

\vskip .2cm
 Boundedness of the small Hankel operator between classical weighted Bergman spaces has been considered in \cite{BL} where using duality and test functions, the authors obtained a full characterization of bounded Hankel operators between Bergman spaces except for estimations with loss i.e. $h_b:\mathcal {A}_\alpha^p(\mathbb B^n)\mapsto \mathcal {A}_\alpha^q(\mathbb B^n)$ with $1\le q< p<\infty$. The estimations with loss have been recently handled by J. Pau and R. Zhao in \cite{PauZhao2}, closing the subject for the classical weighted Bergman spaces.
 \vskip .2cm
 Our second interest  in this paper is for the boundedness of the small Hankel operators $h_b$ from $\mathcal {A}_\alpha^{\Phi_1}(\mathbb
 B^n)$  to $\mathcal {A}_\alpha^{\Phi_2}(\mathbb B^n)$. We do not use a specific method but combine several techniques some of them appearing in \cite{BL} or used in the case of Hardy-Orlicz spaces in \cite{ST, ST1}. In particular, when considering boundedness of $h_b$ on $\mathcal {A}_\alpha^\Phi (\mathbb B^n)$ with $\Phi\in \mathscr{U}^q$, we use a weak factorization result of the Bergman space $\mathcal {A}_\alpha^1(\mathbb B^n)$ in term of Bergman-Orlicz functions, extending the usual weak factorization for this space. Nevertheless, we do not generalize this method for the whole situation, as when $\Phi_1$ and $\Phi_2$ are growth functions with $\Phi_2\in \mathscr{U}^q$, we are dealing only with the upper triangle case, i.e  $\Phi_1^{-1}(t)\Psi_2^{-1}(t)\in \mathscr{U}^q$, $\Psi_2$ being the complementary function of $\Phi_2$ to be defined later.
 \vskip .2cm
The ranges of the symbols of bounded Hankel operators obtained here are some weighted Lipschitz spaces related to the dual spaces of
Bergman Orlicz spaces with concave growth functions as given above. This will allow us to study the boundedness of the Hankel operators between
Bergman-Orlicz spaces in the same range of growth functions
 as in \cite{BS,ST}.   
However, when $\Phi_2$ is a concave growth function, we will suppose that $\Phi_2$ satisfies  a Dini condition to be defined later.  This will cause additional restrictions on the growth functions $\Phi_1$ and $\Phi_2$ for which we are able to extend the Hankel operators, $h_b$, into bounded operators from
 $\mathcal A^{\Phi_1}_\alpha(\mathbb B^n)$ into $\mathcal A^{\Phi_2}_\alpha(\mathbb B^n)$.
In particular, we are able here to handle  the cases where the growth functions
are given by $\Phi_1(t)=\left(\frac t{\log(e+t)}\right)^p,\; p\leq 1$ and $\Phi_2(t)=\left(\frac t{\log(e+t)}\right)^s,\; s< 1$. Moreover, our results generalize the results obtained in \cite{BL}.
\vskip .2cm
The paper is organized as follows, in section \ref{section2}, we collect some results that are needed to characterize the dual spaces of large Bergman-Orlicz space and to study the boundedness propoerties of Hankel operators between Bergman-Orlicz spaces. In section \ref{section3}, we deal with the duality question for Bergman-Orlicz spaces with concave exponent. In the last section, each subsection is devoted to the study, in each case, of the boundedness of the Hankel operator $h_b$ from $\mathcal A^{\Phi_1}(\mathbb
 B^n)$ into $\mathcal A^{\Phi_2}(\mathbb
 B^n)$, when $\Phi_i\in \mathscr{L}_p \;\textrm{or}\;\mathscr{U}^q,\;i=1,2.$
\vskip .2cm
 Finally, all over the text, $C$ will be a constant not necessary the same at each occurrence. We will also use the notation $C(k)$
 to express the fact that the constant depends on the underlined parameter. Given two positive quantities $A$ and $B$, the notation
 $A\lesssim B$ means that $A\le CB$ for some positive constant $C$. When $A\lesssim B$ and $B\lesssim A$, we write $A\backsimeq B$.

\section{Preliminaries}\label{section2}
In this section, we recall some known results that are needed in our study, we also extend to Orlicz setting many some classical results known for the Bergman spaces.
\subsection{Some properties of growth functions}
We collect in this subsection some properties of growth functions we shall used later. For $\Phi$ a convex growth function, we recall that the complementary function, $\Psi : \mathbb R_+ \rightarrow \mathbb R_+$, is defined by
\begin{equation}\label{complementarydefinition}
\Psi(s)=\sup_{t\in\mathbb R_+}\{ts - \Phi(t)\}.
\end{equation}
 One easily checks that if $\Phi\in \mathscr{U}^q$, then $\Psi$ is also a growth function of lower type such that $t\mapsto \frac{\Psi(t)}{t}$ is non-decreasing but which may not  satisfy the $\Delta_2-$conditon. We say that the growth function $\Phi$ satisfies the $\bigtriangledown_2-$condition whenever both $\Phi$ and its complementary satisfy the $\Delta_2-$conditon.
\vskip .2cm
For $\Phi$ a $\mathcal C^1$ growth function, the lower and the upper indices of $\Phi$ are respectively defined by
$$a_\Phi:=\inf_{t>0}\frac{t\Phi^\prime(t)}{\Phi(t)}\,\,\,\textrm{and}\,\,\,b_\Phi:=\sup_{t>0}\frac{t\Phi^\prime(t)}{\Phi(t)}.$$
We recall that when $\Phi$ is convex, then $1\le a_\Phi\le b_\Phi<\infty$ and, if $\Phi$ is concave, then $0<a_\Phi\le b_\Phi\le 1$. We have the following useful fact.
\blem\label{indices}
Let $\Phi$ be a $\mathcal C^1$ growth function. Denote by $p$ and $q$ its lower and its upper indices respectively. Then the functions $\frac{\Phi(t)}{t^p}$ and $\frac{\Phi^{-1}(t)}{t^{1/q}}$ are increasing.
\elem
\begin{proof}
We only prove that $\frac{\Phi(t)}{t^p}$ is increasing. The proof is the same for the second function. Recall that by definition of $p$, we have $p\leq\frac{t\Phi^\prime(t)}{\Phi(t)}$ for any $t>0$. It easily follows that
$$\left(\frac{\Phi(t)}{t^p}\right)^\prime=\frac{\Phi^\prime(t)}{t^p}-p\frac{\Phi(t)}{t^{p+1}}\geq \frac{\Phi^\prime(t)}{t^p}-\frac{t\Phi^\prime(t)}{\Phi(t)}\times\frac{\Phi(t)}{t^{p+1}}=0.$$
The proof is complete.
\end{proof}
\begin{remark}\label{about-indices}
One useful way to use this lemma is to observe that it allow us to say that, if $\Phi\in \mathscr{L}_p$, then the growth function $\Phi_p$, defined by
$\Phi_p(t)=\Phi(t^{1/p})$, is in $\mathscr{U}^q$ for some $q\geq 1$. So we may assume that $\Phi_p$ is convex.


We also observe that $a_\Phi$ (resp. $b_\Phi$) coincides with the biggest (resp. smallest) number $p$ such that $\Phi$ is of lower (resp. upper) type $p$.
\end{remark}
\vskip .2cm
 We say that  $\Phi\in \mathscr{U}^q$ satisfies the Dini condition if there exists a  constant $C>0$ such that, for $t>0$,
\begin{equation}\label{dinicondition}
\int_0^t\frac{\Phi(s)}{s^2}ds\leq C\frac{\Phi(t)}{t}.
\end{equation}
We observe that if $\Phi$ satisfies (\ref{dinicondition}), then $\Phi$ satisfies the $\bigtriangledown_2-$condition.

\vskip .2cm
We will also make use of the following  properties of growth functions
established in section $2$ of \cite{ST}. We recall them here for quick references.
\bprop\label{phiandinverse}
The following assertion holds:
\begin{center}
    $\Phi\in \mathscr{L}_p$ if and only if $\Phi^{-1}\in \mathscr{U}^{1/p}.$
\end{center}
\eprop
\blem\label{stabilityoflowertypeclass}
Let $\Phi_1\in \mathscr{L}_p$ and $\Phi_2\in \mathscr{U}^q$, and $\Psi_2$ the complementary function of $\Phi_2.$ Let $\Phi$ be such that
\begin{equation*}
\Phi^{-1}(t):=\Phi_1^{-1}(t)\Psi_2^{-1}(t).
\end{equation*}
Then $\Phi\in \mathscr{L}_r$ for some $r\leq p.$
\elem
\blem\label{conditionforlowertype1}
Let $\Phi_1$ be a growth function and $\Phi_2\in \mathscr{U}^q$, $\rho_i(t)=\frac{1}{t\Phi_i^{-1}(1/t)}$ and $\Psi_2$ the complementary of $\Phi_2.$ Then, if
$$\rho_{\Phi}:=\frac{\rho_1}{\rho_2},$$
we also have
\begin{equation}\label{equivcompland}
\Phi^{-1}(t)\simeq \Phi_1^{-1}(t)\Psi_2^{-1}(t)
\end{equation}
and vice-versa.
\elem
\blem\label{conditionforlowertype}
Let $\Phi_1$ and $\Phi_2$ be in $ \mathscr{U}^q$, and $\Psi_2$ the complementary function of $\Phi_2$.  Let $\Phi$ be such that $\Phi^{-1}(t)=\Phi_1^{-1}(t)\Psi_2^{-1}(t)$. We suppose that $\Phi_2$ satisfies the Dini condition (\ref{dinicondition}) and that $$\frac{\Phi_2^{-1}\circ\Phi_1(t)}{t}\quad\textrm{is non-increasing}.$$
Then $\Phi\in \mathscr{L}_p$ for some $p>0$.
\elem

The following can be adapted from \cite{VT}.
\begin{proposition}\label{prop:volbergtolokonnikov}
For $\Phi_1$ and $\Phi_2$ two growth functions of  lower type, $\alpha>-1$, the bilinear map $(f,g)\mapsto fg$ sends $L_\alpha^{\Phi_1}\times L_\alpha^{\Phi_2}$ onto $L_\alpha^{\Phi}$, with the inverse mappings of $\Phi_1,\Phi_2$ and $\Phi$ related by
\begin{equation}\label{eq:volbergproduct}
\Phi^{-1}=\Phi_1^{-1}\times \Phi_2^{-1}.
\end{equation}
Moreover, there exists some constant $c$ such that
$$||fg||^{lux}_{L_\alpha^{\Phi}}\le c ||f||^{lux}_{L_\alpha^{\Phi_1}}||g||^{lux}_{L_\alpha^{\Phi_2}}.$$
\end{proposition}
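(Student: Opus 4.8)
The plan is to establish the two assertions separately---the norm inequality, which shows the product map lands in $L_\alpha^{\Phi}$, and the surjectivity (factorization)---both of which rest on the single algebraic identity $\Phi^{-1}=\Phi_1^{-1}\Phi_2^{-1}$. First I would record the structural consequences of this identity. Since every growth function of lower type is equivalent to a $\mathcal C^1$ strictly increasing representative, I assume $\Phi_1,\Phi_2,\Phi$ are honestly invertible. By Proposition \ref{phiandinverse} we have $\Phi_1^{-1}\in\mathscr{U}^{1/p_1}$ and $\Phi_2^{-1}\in\mathscr{U}^{1/p_2}$, and since a product of two upper-type functions is again of upper type, $\Phi^{-1}=\Phi_1^{-1}\Phi_2^{-1}$ is of upper type; hence, again by Proposition \ref{phiandinverse}, $\Phi\in\mathscr{L}_r$ for some $r>0$. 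This fact is exactly what will let me pass from a modular estimate to a Luxemburg-norm estimate at the end.

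The heart of the argument is the Young-type pointwise inequality $\Phi(ab)\le \Phi_1(a)+\Phi_2(b)$ for all $a,b\ge 0$. To prove it, I set $u=\Phi_1(a)$ and $v=\Phi_2(b)$ and assume, say, $u\le v$; then monotonicity of $\Phi_1^{-1}$ gives $ab=\Phi_1^{-1}(u)\Phi_2^{-1}(v)\le \Phi_1^{-1}(v)\Phi_2^{-1}(v)=\Phi^{-1}(v)$, so that $\Phi(ab)\le v\le \Phi_1(a)+\Phi_2(b)$, the case $v\le u$ being symmetric. Next, for $f\in L_\alpha^{\Phi_1}$ and $g\in L_\alpha^{\Phi_2}$ with Luxemburg norms $\lambda_1=\|f\|^{lux}_{L_\alpha^{\Phi_1}}$ and $\lambda_2=\|g\|^{lux}_{L_\alpha^{\Phi_2}}$, I apply the inequality with $a=|f(z)|/\lambda_1$, $b=|g(z)|/\lambda_2$ and integrate against $d\nu_\alpha$, using $\int\Phi_1(|f|/\lambda_1)\,d\nu_\alpha\le 1$ and the analogous bound for $g$, to obtain $\int_{\mathbb B^n}\Phi\!\left(|fg|/(\lambda_1\lambda_2)\right)d\nu_\alpha\le 2$. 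Finally I invoke that $\Phi$ is of lower type $r$: writing $|fg|/(c\lambda_1\lambda_2)=t\cdot|fg|/(\lambda_1\lambda_2)$ with $t=1/c\le 1$ gives $\Phi(|fg|/(c\lambda_1\lambda_2))\le C t^{r}\Phi(|fg|/(\lambda_1\lambda_2))$, so choosing $c$ with $2Cc^{-r}\le 1$ yields $\int\Phi(|fg|/(c\lambda_1\lambda_2))\,d\nu_\alpha\le 1$, i.e. $\|fg\|^{lux}_{L_\alpha^{\Phi}}\le c\,\lambda_1\lambda_2$, which is the claimed estimate.

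For surjectivity, given $h\in L_\alpha^{\Phi}$ I would exploit the same identity in reverse: set $F=\Phi_1^{-1}\!\left(\Phi(|h|)\right)$ and $G=\Phi_2^{-1}\!\left(\Phi(|h|)\right)$, so that $FG=\Phi^{-1}(\Phi(|h|))=|h|$, while $\int_{\mathbb B^n}\Phi_1(F)\,d\nu_\alpha=\int_{\mathbb B^n}\Phi(|h|)\,d\nu_\alpha<\infty$ shows $F\in L_\alpha^{\Phi_1}$, and likewise $G\in L_\alpha^{\Phi_2}$. Putting $f=F\cdot h/|h|$ on $\{h\neq 0\}$ and $f=0$ elsewhere, together with $g=G$, gives $fg=h$ with $f,g$ in the required spaces, so the product map is onto. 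The computation is otherwise elementary; the only points requiring care are the justification that $\Phi$ is a genuine lower-type growth function (needed for the modular-to-norm passage) and the harmless reductions to invertible $\mathcal C^1$ representatives, so I expect the bookkeeping of constants in the last rescaling step to be the main---though minor---obstacle.
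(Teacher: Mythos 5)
Your argument is correct. Note that the paper itself gives no proof of this proposition; it only remarks that ``the following can be adapted from \cite{VT}'', so there is nothing in the text to compare against line by line. What you wrote is the standard adaptation and is surely what the authors intend: the Young-type pointwise inequality $\Phi(ab)\le\Phi_1(a)+\Phi_2(b)$ derived directly from $\Phi^{-1}=\Phi_1^{-1}\Phi_2^{-1}$, the resulting modular bound $\int\Phi(|fg|/\lambda_1\lambda_2)\,d\nu_\alpha\le 2$, the passage to the Luxemburg quasi-norm via the lower type of $\Phi$ (which you correctly justify through Proposition \ref{phiandinverse} and the fact that a product of upper-type functions is of upper type), and the explicit factorization $F=\Phi_1^{-1}(\Phi(|h|))$, $G=\Phi_2^{-1}(\Phi(|h|))$ for surjectivity. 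The only points of care are the ones you already flag: replacing the growth functions by equivalent strictly increasing representatives so that the inverses are honest bijections of $[0,\infty)$, and checking that $\lambda_1,\lambda_2$ can be taken so that the two modulars are at most $1$. Your proof is a complete and self-contained substitute for the citation.
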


\subsection{Boundedness of the Bergman projection}
  We start by recalling  the following result in \cite{Bekolle}.
 \begin{proposition}\label{prop:weakboundedness}
 Let $\alpha>-1$,  there exists a constant $C>0$ such that for $f\in \mathcal {A}_\alpha^1(\mathbb B^n)$
 $$\nu_\alpha\left(\{z\in \mathbb B^n: |P_\alpha f(z)|>\lambda\}\right)\le C\frac{\|f\|_{1,\alpha}}{\lambda}.$$
 \end{proposition}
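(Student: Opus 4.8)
The assertion is the weak type $(1,1)$ estimate for the Bergman projection, and I will establish it for the full extension of $P_\alpha$ to $L_\alpha^1(\mathbb B^n)$, of which the stated inequality is a special case. The plan is to exhibit $P_\alpha$ as a Calder\'on--Zygmund operator on the space of homogeneous type $(\mathbb B^n, d, \nu_\alpha)$, where $d(z,w)=|1-\langle z,w\rangle|^{1/2}$ is the non-isotropic (Kor\'anyi type) quasi-distance, and then to invoke the general theory which deduces the weak type $(1,1)$ bound from $L^2$-boundedness together with size and regularity estimates on the kernel. The $L^2$-boundedness is free: $P_\alpha$ is the orthogonal projection of $L_\alpha^2(\mathbb B^n)$ onto $\mathcal A_\alpha^2(\mathbb B^n)$, hence a contraction there.

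First I would check that $(\mathbb B^n,d,\nu_\alpha)$ is a space of homogeneous type. The function $d$ is a quasi-distance, and the Forelli--Rudin volume estimates give $\nu_\alpha\big(B(z,r)\big)\approx r^{2(n+1+\alpha)}$ for the $d$-balls $B(z,r)=\{w:|1-\langle z,w\rangle|<r^2\}$, whence $\nu_\alpha$ is doubling. With this normalization the kernel satisfies the correct size bound, since
\[
|K_\alpha(z,\xi)|=|1-\langle z,\xi\rangle|^{-(n+1+\alpha)}=d(z,\xi)^{-2(n+1+\alpha)}\approx \frac{1}{\nu_\alpha\big(B(z,d(z,\xi))\big)}.
\]

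The substance of the proof is the regularity (H\"ormander) estimate: for $d(z,\xi)\ge 2 d(\xi,\xi')$ one must show $|K_\alpha(z,\xi)-K_\alpha(z,\xi')|\lesssim \nu_\alpha\big(B(z,d(z,\xi))\big)^{-1}\,\big(d(\xi,\xi')/d(z,\xi)\big)$. Writing $K_\alpha(z,\xi)=g(\langle z,\xi\rangle)$ with $g(u)=(1-u)^{-(n+1+\alpha)}$ and applying the mean value theorem in the single complex variable $u=\langle z,\xi\rangle$ produces the factor $|\langle z,\xi-\xi'\rangle|$ against $|1-\langle z,\xi^*\rangle|^{-(n+2+\alpha)}$; under the separation hypothesis the quasi-triangle inequality gives $|1-\langle z,\xi^*\rangle|\approx |1-\langle z,\xi\rangle|$. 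The crux — and the step I expect to be the main obstacle — is the refined tangential estimate
\[
|\langle z,\xi-\xi'\rangle|\lesssim |1-\langle \xi,\xi'\rangle|^{1/2}\,|1-\langle z,\xi\rangle|^{1/2}=d(\xi,\xi')\,d(z,\xi),
\]
which supplies exactly the half power in each variable needed to upgrade the crude gradient bound to $d(\xi,\xi')\,d(z,\xi)^{-2(n+1+\alpha)-1}$, i.e. the desired H\"ormander condition with exponent $\eta=1$; the naive estimate using $|\xi-\xi'|$ in place of $|\langle z,\xi-\xi'\rangle|$ falls short by a power of $d(z,\xi)$, so the gain of the tangential factor is essential. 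Once the size and regularity conditions are in hand, the Calder\'on--Zygmund theorem on spaces of homogeneous type (a Calder\'on--Zygmund decomposition of $f\in L^1_\alpha$ at height $\lambda$, followed by the usual good/bad splitting, with the regularity estimate controlling the off-diagonal contribution of the bad part) yields $\nu_\alpha(\{|P_\alpha f|>\lambda\})\lesssim \|f\|_{1,\alpha}/\lambda$, which is the claim.
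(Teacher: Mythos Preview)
The paper does not prove this proposition; it is quoted from B\'ekoll\'e \cite{Bekolle}. Your strategy --- realise $P_\alpha$ as a Calder\'on--Zygmund operator and appeal to the homogeneous-type theory --- is the standard route and does lead to the result, but the setup you describe has a genuine gap: the function $d(z,w)=|1-\langle z,w\rangle|^{1/2}$ is \emph{not} a quasi-distance on $\mathbb B^n$, since $d(z,z)=(1-|z|^2)^{1/2}>0$ for every $z\neq 0$. Consequently the volume claim $\nu_\alpha\big(B(z,r)\big)\approx r^{2(n+1+\alpha)}$ fails as stated --- for instance $B(0,r)$ is empty for all $r<1$, and in general $B(z,r)=\varnothing$ whenever $r^2<1-|z|$, because $|1-\langle z,w\rangle|\ge 1-|z||w|\ge 1-|z|$ for every $w\in\mathbb B^n$. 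The abstract Coifman--Weiss theorem therefore cannot be invoked on this triple as written.

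The repair is not hard but must be made explicit. One option is to note that $d(z,z)^2=1-|z|^2\le 2|1-\langle z,w\rangle|=2\,d(z,w)^2$ for all $w\in\mathbb B^n$, so $d$ is a quasi-distance in the relaxed sense $d(z,z)\lesssim d(z,w)$; the Calder\'on--Zygmund machinery still runs in this generality provided one restricts to balls of radius $r\gtrsim d(z,z)$, which is exactly the range where your volume estimate and your size identity $|K_\alpha(z,\xi)|\approx \nu_\alpha\big(B(z,d(z,\xi))\big)^{-1}$ are valid, and your H\"ormander estimate then goes through. Alternatively --- and this is closer to B\'ekoll\'e's original argument --- one carries out the Calder\'on--Zygmund decomposition of $f\in L_\alpha^1(\mathbb B^n)$ by hand relative to a dyadic system of Carleson tents in $\mathbb B^n$, bypassing the abstract homogeneous-type framework entirely.
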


The next result follows from interpolation with Orlicz functions (see \cite{DHZZ}).
\begin{proposition}
Let $\alpha>-1$ and $\Phi\in \mathscr{U}^q$. Suppose that $\Phi$ satisfies the $\bigtriangledown_2-$condition. Then the Bergman projection $P_\alpha$ extends into a bounded operator on $L_\alpha^\Phi (\mathbb B^n)$.
\end{proposition}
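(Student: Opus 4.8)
The plan is to deduce the statement from the classical $L^p$-boundedness of $P_\alpha$ by an interpolation argument tailored to Orlicz spaces; this is exactly the route taken in \cite{DHZZ}. First I would record what the hypotheses say about the indices of $\Phi$. Since $\Phi\in\mathscr{U}^q$ we have $b_\Phi\le q<\infty$, and since $\Phi$ satisfies the $\bigtriangledown_2$-condition its complementary function $\Psi$ satisfies the $\Delta_2$-condition, which is equivalent to the lower index being strictly larger than $1$, i.e. $a_\Phi>1$. Hence $1<a_\Phi\le b_\Phi<\infty$. By Remark \ref{about-indices} (where $a_\Phi$ is the largest lower type and $b_\Phi$ the smallest upper type), I may then fix two exponents $p_0,p_1$ with $1<p_0<a_\Phi\le b_\Phi<p_1<\infty$ for which $\Phi$ is simultaneously of lower type $p_0$ and of upper type $p_1$; equivalently, by Lemma \ref{indices}, both $t\mapsto \Phi(t)/t^{p_0}$ and $t\mapsto t^{p_1}/\Phi(t)$ are, up to constants, non-decreasing.

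The second step is to secure the endpoint estimates. The Bergman projection, or rather its positive majorant $P_\alpha^+$ with kernel $|K_\alpha(z,\xi)|$, is bounded on $L_\alpha^{p_0}(\mathbb B^n)$ and on $L_\alpha^{p_1}(\mathbb B^n)$ by the standard Forelli--Rudin estimates, since $1<p_0\le p_1<\infty$. Because $|P_\alpha f(z)|\le P_\alpha^+(|f|)(z)$ pointwise, it suffices to bound the positive linear operator $P_\alpha^+$ on $L_\alpha^\Phi(\mathbb B^n)$, and the norm control will transfer back to $P_\alpha$.

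The third step is the interpolation itself. The measure $\nu_\alpha$ is finite and non-atomic, so $L_\alpha^\Phi(\mathbb B^n)$ is a rearrangement-invariant Banach function space whose Boyd indices are precisely $1/b_\Phi$ and $1/a_\Phi$. With the above choice of $p_0,p_1$ these lie strictly inside $(1/p_1,1/p_0)$, since $p_1>b_\Phi$ gives $1/p_1<1/b_\Phi$ and $p_0<a_\Phi$ gives $1/a_\Phi<1/p_0$. Boyd's interpolation theorem then guarantees that any operator bounded on $L_\alpha^{p_0}$ and $L_\alpha^{p_1}$ is automatically bounded on $L_\alpha^\Phi$; applying this to $P_\alpha^+$, and hence to $P_\alpha$, yields the assertion.

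I expect the only delicate point to be the identification of the Orlicz space over $(\mathbb B^n,\nu_\alpha)$ as a rearrangement-invariant space satisfying the hypotheses of Boyd's theorem, together with the computation of its Boyd indices from $a_\Phi$ and $b_\Phi$; once this dictionary between the type/index data of $\Phi$ and the Boyd indices is in place, the endpoint $L^p$ bounds and the strict index inequalities make the conclusion immediate. An alternative that sidesteps the rearrangement-invariant machinery would be a Marcinkiewicz-type argument carried out directly on distribution functions: one combines the weak behavior of $P_\alpha^+$ at $p_0$ and $p_1$ (including the weak $(1,1)$ input of Proposition \ref{prop:weakboundedness}) with the $\Delta_2$-condition on $\Phi$ to integrate the resulting estimate against $d\Phi$. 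The Boyd route is, however, cleaner given the interpolation hint, so I would present that as the main proof and relegate the distributional argument to a remark.
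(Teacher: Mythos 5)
Your proposal is correct and follows the same route the paper intends: the paper gives no details and simply invokes ``interpolation with Orlicz functions'' via \cite{DHZZ}, which is precisely the argument you spell out (endpoint $L^{p_0}_\alpha$ and $L^{p_1}_\alpha$ bounds for the positive majorant from Forelli--Rudin, the index inequalities $1<a_\Phi\le b_\Phi<\infty$ extracted from the $\bigtriangledown_2$-condition and $\Phi\in\mathscr{U}^q$, and Boyd-type interpolation for the Orlicz space over the finite measure $\nu_\alpha$). Your filled-in version is sound, so there is nothing further to compare.
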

From the duality result in \cite{RR}, since $P_\alpha$ is bounded in
$L^\Phi_\alpha$ for $\Phi\in \mathscr{U}^q$ that satisfies the
$\bigtriangledown_2-$condition, we obtain the following duality result in this case.
\begin{proposition}
Let $\Phi\in \mathscr{U}^q$ and $\alpha>-1$. Suppose that  $\Phi$
satisfies the $\bigtriangledown_2-$condition and denote by $\Psi$ its complementary function. Then the dual space $\left(\mathcal A_{\alpha}^{\Phi}(\mathbb B^n)\right)^*$ of $\mathcal A_{\alpha}^{\Phi}(\mathbb B^n)$ identifies with $\mathcal A_{\alpha}^{\Psi}(\mathbb B^n)$
under the integral pairing
$$\langle f,g\rangle_\alpha=\int_{\mathbb B^n}f(z)\overline {g(z)}d\nu_\alpha (z),\,\,\,f\in \mathcal A_{\alpha}^{\Phi}(\mathbb B^n),\,\,\,g\in \mathcal A_{\alpha}^{\Psi}(\mathbb B^n).$$
\end{proposition}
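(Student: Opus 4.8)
The plan is to deduce the Bergman--Orlicz duality from the duality of the underlying Orlicz space $L_\alpha^\Phi$ together with the boundedness of the Bergman projection, exactly as one does in the classical reflexive range $1<p<\infty$. First I would check that each $g\in\mathcal A_\alpha^\Psi$ induces a bounded functional $f\mapsto\langle f,g\rangle_\alpha$ on $\mathcal A_\alpha^\Phi$: this is immediate from the H\"older inequality in Orlicz spaces, which gives $|\langle f,g\rangle_\alpha|\le\int_{\mathbb B^n}|f||g|\,d\nu_\alpha\le 2\|f\|^{lux}_{\alpha,\Phi}\|g\|^{lux}_{\alpha,\Psi}$, so in particular the integral pairing is absolutely convergent and no limiting procedure is needed here (in contrast with the concave case of Theorem~\ref{lem:dualitysmall}). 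The map $g\mapsto\langle\cdot,g\rangle_\alpha$ is thus a bounded linear map from $\mathcal A_\alpha^\Psi$ into $(\mathcal A_\alpha^\Phi)^*$, and its injectivity follows by testing against the monomials $z^\beta$, which belong to $\mathcal A_\alpha^\Phi$ since $\Phi$ is of finite upper type: $\langle z^\beta,g\rangle_\alpha=0$ for all $\beta$ forces every Taylor coefficient of $g$ to vanish, hence $g=0$.

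The substance of the argument is surjectivity. Given $L\in(\mathcal A_\alpha^\Phi)^*$, I would consider the functional $\widetilde L:=L\circ P_\alpha$ on $L_\alpha^\Phi$. Since the preceding proposition gives that $P_\alpha$ is bounded on $L_\alpha^\Phi$ (because $\Phi\in\mathscr U^q$ satisfies the $\bigtriangledown_2$-condition) and $L$ is bounded on $\mathcal A_\alpha^\Phi$, the functional $\widetilde L$ is bounded on $L_\alpha^\Phi$. The Orlicz duality of \cite{RR}, valid precisely because $\Phi$ satisfies $\bigtriangledown_2$ (so that both $\Phi$ and $\Psi$ are $\Delta_2$), yields some $h\in L_\alpha^\Psi$ with $\widetilde L(u)=\int_{\mathbb B^n}u\overline h\,d\nu_\alpha$ for every $u\in L_\alpha^\Phi$. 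For $f\in\mathcal A_\alpha^\Phi$ one has $P_\alpha f=f$, hence $L(f)=\widetilde L(f)=\int_{\mathbb B^n}f\overline h\,d\nu_\alpha$. I would then set $g:=P_\alpha h$ and invoke the self-adjointness of $P_\alpha$ for $\langle\cdot,\cdot\rangle_\alpha$, which comes from the Hermitian symmetry $\overline{K_\alpha(z,\xi)}=K_\alpha(\xi,z)$ together with Fubini's theorem, to write
$$L(f)=\int_{\mathbb B^n}(P_\alpha f)\overline h\,d\nu_\alpha=\int_{\mathbb B^n}f\,\overline{P_\alpha h}\,d\nu_\alpha=\langle f,g\rangle_\alpha.$$

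The main obstacle is to ensure that $g=P_\alpha h$ genuinely lies in $\mathcal A_\alpha^\Psi$, i.e.\ that $P_\alpha$ maps $L_\alpha^\Psi$ boundedly into itself. This is where the $\bigtriangledown_2$-hypothesis is used a second time: since $\Phi$ satisfies $\bigtriangledown_2$, its complementary $\Psi$ is again $\Delta_2$ and, being convex with $t\mapsto\Psi(t)/t$ non-decreasing, it is of finite upper type, so $\Psi\in\mathscr U^{q'}$ for some $q'\ge1$; moreover the complementary of $\Psi$ is (equivalent to) $\Phi$, which is $\Delta_2$, so $\Psi$ itself satisfies $\bigtriangledown_2$. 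Hence the boundedness proposition applies verbatim to $\Psi$, giving $P_\alpha$ bounded on $L_\alpha^\Psi$ and therefore $g\in\mathcal A_\alpha^\Psi$. Combined with the two-sided estimate $\|g\|^{lux}_{\alpha,\Psi}\lesssim\|L\|\lesssim\|g\|^{lux}_{\alpha,\Psi}$ (the upper bound from H\"older, the lower bound from $\|g\|^{lux}_{\alpha,\Psi}\lesssim\|h\|^{lux}_{\alpha,\Psi}\lesssim\|\widetilde L\|\lesssim\|L\|$ via the two boundedness statements and the Orlicz duality), this shows that $g\mapsto\langle\cdot,g\rangle_\alpha$ is an isomorphism of $\mathcal A_\alpha^\Psi$ onto $(\mathcal A_\alpha^\Phi)^*$, completing the identification. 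Once these structural facts are in place, the remaining verifications are routine.
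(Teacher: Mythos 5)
Your argument is correct and is precisely the route the paper itself indicates: the paper offers no detailed proof, stating only that the result follows from the Rao--Ren duality for $L_\alpha^\Phi$ combined with the boundedness of $P_\alpha$ under the $\bigtriangledown_2$-condition, which is exactly the composition $L\circ P_\alpha$ plus projection-of-the-representative scheme you carry out. Your additional observations (that $\Psi$ again lies in $\mathscr U^{q'}$ and satisfies $\bigtriangledown_2$, so $P_\alpha$ is also bounded on $L_\alpha^\Psi$, and the self-adjointness of $P_\alpha$ via Fubini) are the right details to fill in and raise no issues.
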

\subsection{Lipschitz-type spaces}
We consider in this subsection some weighted Lipschitz spaces and their logarithmic counterparts.
We recall
 that given an analytic function $f$ on $\mathbb B^n$, the
 radial derivative $Rf$ of $f$ is defined by
 $$Rf(z)=\sum_{j=1}^{n}z_j \frac{\partial f}{\partial z_j}(z).$$

For $\alpha>-1$ and a weight $\rho$ (of upper type $\gamma$), we recall that the weighted Lipschitz space $\Gamma_{\alpha,\rho}(\mathbb B^n)$ has been defined as the space of holomorphic functions $f$ in $\mathbb B^n$ such that, for some integer $k>\gamma (n+1+\alpha) $ and a positive constant $C>0$, we have
$$ |R^kf(z)|\le C(1-|z|^2)^{-k}\rho\left((1-|z|^2)^{n+1+\alpha}\right).$$
We will also need a logarithmic version of the above space, $L\Gamma_{\alpha,\rho}(\mathbb B^n)$, defined as the space of holomorphic functions $f$ in $\mathbb B^n$ such that, for some $k>\gamma (n+1+\alpha)$ and a positive constant $C>0$, we have
$$ |R^kf(z)|\le C(1-|z|^2)^{-k}\rho\left((1-|z|^2)^{n+1+\alpha}\right)\left(\log\frac{1}{1-|z|^2}\right)^{-1}.$$
One can show that, as in the classical Lipschitz spaces,  these spaces are independent of $k$. We prove this in the following proposition.
\begin{proposition}\label{independance-ofk}
Let $\rho$ be a weight satisfying (\ref{rho-uppertype}).
The weighted (resp. logarithmic weighted) Lipschitz spaces   $\Gamma_{\alpha,\rho}(\mathbb B^n)$ (resp. $L\Gamma_{\alpha,\rho}(\mathbb B^n)$) are independent of various values of $k$.
\end{proposition}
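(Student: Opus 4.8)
The plan is to prove independence of $k$ by showing that for two integers $k<m$ (both exceeding $\gamma(n+1+\alpha)$), the two size conditions are equivalent. The direction from higher to lower derivative order is typically the subtle one in this kind of statement, so I would organize the argument around two separate lemma-type estimates relating $R^kf$ and $R^mf$, and I expect the passage from a bound on $R^mf$ back to a bound on $R^kf$ to be the main obstacle.

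First I would treat the easy direction: assuming the estimate holds for some $k$, deduce it for larger orders. Since $R^{k+1}f = R(R^kf)$, I would differentiate the hypothesized bound on $R^kf$. The standard tool here is the fact that if a holomorphic $g$ satisfies $|g(z)|\le C(1-|z|^2)^{-s}\psi(z)$ with $\psi$ slowly varying, then $|Rg(z)|\lesssim (1-|z|^2)^{-s-1}\psi(z)$, obtained from the Cauchy-type integral estimate for the radial derivative on a Bergman ball $D(z,r)$ of fixed hyperbolic radius, on which $1-|w|^2 \backsimeq 1-|z|^2$. The key point making $\rho\left((1-|z|^2)^{n+1+\alpha}\right)$ behave like a slowly varying factor is the upper-type condition (\ref{rho-uppertype}): on $D(z,r)$ the quantity $\rho\left((1-|w|^2)^{n+1+\alpha}\right)$ stays comparable to its value at $z$. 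Iterating this gives the bound for any $m>k$, and the same computation applies verbatim to the logarithmic version since $\log\frac{1}{1-|w|^2}\backsimeq \log\frac{1}{1-|z|^2}$ on the same ball.

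For the reverse direction, I would recover a bound on $R^kf$ from one on $R^mf$ by integrating. The mechanism is that $R$ admits a fractional-integration-type inverse: one writes $f$ (modulo a polynomial coming from the low-order Taylor data at $0$) in terms of $R^mf$ via an explicit integral kernel, and more directly one can integrate the estimate for $R^{k+1}f$ radially to control $R^kf$. Concretely, using $R^kf(z) - R^kf(0)$ expressed through $\int_0^1 (R^{k+1}f)(tz)\,\frac{dt}{t}$ along the radius, I would insert the bound $|R^{k+1}f(tz)|\lesssim (1-t^2|z|^2)^{-k-1}\rho\left((1-t^2|z|^2)^{n+1+\alpha}\right)$ and reduce matters to the scalar integral estimate
\begin{equation*}
\int_0^{|z|} (1-t^2)^{-k-1}\rho\left((1-t^2)^{n+1+\alpha}\right)\,dt \lesssim (1-|z|^2)^{-k}\rho\left((1-|z|^2)^{n+1+\alpha}\right).
\end{equation*}
This is exactly where the hypotheses $k>\gamma(n+1+\alpha)$ and the upper type $\gamma$ of $\rho$ are used: writing $\rho\left((1-t^2)^{n+1+\alpha}\right)\lesssim (1-|z|^2)^{\gamma(n+1+\alpha)}(1-t^2)^{-\gamma(n+1+\alpha)}\rho\left((1-|z|^2)^{n+1+\alpha}\right)$ by (\ref{rho-uppertype}) reduces the integral to a power $\int (1-t^2)^{-k-1-\gamma(n+1+\alpha)}\,dt$ type bound, which converges to the right order precisely because the exponent $k+1+\gamma(n+1+\alpha)$ forces the divergent boundary behavior to match $(1-|z|^2)^{-k}$ after accounting for the extracted power. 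I expect verifying this scalar integral cleanly, with the correct balancing of the extra $\gamma(n+1+\alpha)$ factors, to be the technical heart of the proof.

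Finally I would handle the logarithmic space $L\Gamma_{\alpha,\rho}$ in parallel, noting that the extra factor $\left(\log\frac{1}{1-|z|^2}\right)^{-1}$ is decreasing and slowly varying on Bergman balls, so it passes through the differentiation estimate unchanged, while in the integration estimate one uses that $\left(\log\frac{1}{1-t^2}\right)^{-1}\le \left(\log\frac{1}{1-|z|^2}\right)^{-1}$ for $t\le|z|$, which only helps and does not disturb the convergence of the scalar integral. Combining the two directions for each space yields the claimed independence of $k$, and hence the norm built from any admissible $k$ defines the same space up to equivalence of norms.
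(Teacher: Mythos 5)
Your two-direction structure matches the paper's proof. For the hard direction (recovering the bound on $R^kf$ from one on $R^{k+1}f$) you integrate radially and reduce to a scalar integral estimate; this is exactly what the paper does (its Lemma giving $\int_t^1\rho(s^{n+1+\alpha})s^{-k-1}\,ds\lesssim \rho(t^{n+1+\alpha})t^{-k}$, proved by dyadic decomposition). For the easy direction you use Cauchy estimates on Bergman balls where $1-|w|^2\backsimeq 1-|z|^2$, whereas the paper writes $R^kf$ through the reproducing formula, differentiates the kernel, and splits the integral according to $1-|w|^2\lessgtr 1-|z|^2$ using Forelli--Rudin estimates; both are standard and yours is arguably more economical. (The paper also treats $|z|\le 1/2$ separately via the mean value property; your version starting the radial integration at $0$ avoids this since $R^kf(0)=0$.)

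There is, however, one concrete error in the step you yourself identify as the technical heart. The application of (\ref{rho-uppertype}) is written backwards: for $t\le|z|$ one has $1-t^2\ge 1-|z|^2$, so applying the upper-type condition with base point $(1-|z|^2)^{n+1+\alpha}$ and $s=\bigl(\tfrac{1-t^2}{1-|z|^2}\bigr)^{n+1+\alpha}>1$ gives
$$\rho\bigl((1-t^2)^{n+1+\alpha}\bigr)\le C\left(\frac{1-t^2}{1-|z|^2}\right)^{\gamma(n+1+\alpha)}\rho\bigl((1-|z|^2)^{n+1+\alpha}\bigr),$$
i.e.\ the power of $(1-t^2)$ is $+\gamma(n+1+\alpha)$, not $-\gamma(n+1+\alpha)$ as you wrote. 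Your stated inequality would force $\rho(A)\lesssim (B/A)^{\gamma}\rho(B)\le\rho(B)$ for $A\ge B$, contradicting the monotonicity of $\rho$. The tell-tale symptom is that with your exponents the scalar integral becomes $\int(1-t^2)^{-k-1-\gamma(n+1+\alpha)}\,dt$, whose exponent is below $-1$ for every $k\ge 0$, so the hypothesis $k>\gamma(n+1+\alpha)$ is never actually invoked despite your claim that this is where it enters. With the corrected inequality the integral is $\int(1-t^2)^{-k-1+\gamma(n+1+\alpha)}\,dt\backsimeq(1-|z|^2)^{-k+\gamma(n+1+\alpha)}$, where the exponent stays below $-1$ precisely because $k>\gamma(n+1+\alpha)$, and multiplying back by $(1-|z|^2)^{-\gamma(n+1+\alpha)}\rho\bigl((1-|z|^2)^{n+1+\alpha}\bigr)$ yields the desired $(1-|z|^2)^{-k}\rho\bigl((1-|z|^2)^{n+1+\alpha}\bigr)$. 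So the conclusion survives, but the inequality must be fixed for the argument to be valid.
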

\begin{proof}

Let us provide a proof for $\Gamma_{\alpha,\rho}(\mathbb B^n)$, the proof for $L\Gamma_{\alpha,\rho}(\mathbb B^n)$ requires only few harmless modifications.

Let $f$ be an holomorphic function in $\mathbb B^n$. Let us first suppose that there is a constant $C>0$ such that for some nonnegative integer $k>(n+1+\alpha)\gamma$ with $\gamma$ as in (\ref{rho-uppertype}), and any $z\in \mathbb B^n$,
 \Be\label{eq:lipfork}
 |R^kf(z)|\le C(1-|z|^2)^{-k}\rho\left((1-|z|^2)^{n+1+\alpha}\right).
 \Ee
We want to show that (\ref{eq:lipfork})  holds for $k+1$.

 From (\ref{eq:lipfork}), it follows in particular that as $\rho$ is increasing on $(0,1]$, the function $|R^kf(z)|(1-|z|^2)^{k}$ is bounded on $\mathbb B^n$. Hence, for $\beta$ sufficiently large (in fact $\beta>k-\alpha-1$ will do ), we have the representation
 $$R^kf(z)=C_{\alpha\beta}\int_{\mathbb B^n}\frac{R^kf(w)(1-|w|^2)^{\beta}}{\left(1-\langle z,w\rangle\right)^{n+1+\beta+\alpha}}d\nu_\alpha(w).$$
 Thus
 \Beas
 |R^{k+1}f(z)| &=& \left|C_{\alpha\beta}\int_{\mathbb B^n}\frac{(n+1+\beta+\alpha)\langle z,w\rangle R^kf(w)(1-|w|^2)^{\beta}}{\left(1-\langle z,w\rangle\right)^{n+2+\beta+\alpha}}d\nu_\alpha(w)\right|\\ &\lesssim& \int_{\mathbb B^n}\frac{|R^kf(w)|(1-|w|^2)^{\beta}}{|1-\langle z,w\rangle|^{n+2+\beta+\alpha}}d\nu_\alpha(w)\\ &\lesssim& \int_{\mathbb B^n}\frac{\rho\left((1-|w|^2)^{n+1+\alpha}\right)(1-|w|^2)^{\beta-k}}{|1-\langle z,w\rangle|^{n+2+\beta+\alpha}}d\nu_\alpha(w)\\ &=& I_1+I_2,
 \Eeas
where
$$I_1=\int_{1-|w|^2\le 1-|z|^2}\frac{\rho\left((1-|w|^2)^{n+1+\alpha}\right)(1-|w|^2)^{\beta-k}}{|1-\langle z,w\rangle|^{n+2+\beta+\alpha}}d\nu_\alpha(w)$$
and
$$I_2=\int_{1-|w|^2> 1-|z|^2}\frac{\rho\left((1-|w|^2)^{n+1+\alpha}\right)(1-|w|^2)^{\beta-k}}{|1-\langle z,w\rangle|^{n+2+\beta+\alpha}}d\nu_\alpha(w).$$
Let us start by estimating the integral $I_1$. Using the monotonicity of the weight $\rho$ and \cite[Propostion 1.4.10]{R}, we obtain
 \Beas
 I_1 &=& \int_{1-|w|^2\le 1-|z|^2}\frac{\rho\left((1-|w|^2)^{n+1+\alpha}\right)(1-|w|^2)^{\beta-k}}{|1-\langle z,w\rangle|^{n+2+\beta+\alpha}}d\nu_\alpha(w)\\
&\lesssim& \rho\left((1-|z|^2)^{n+1+\alpha}\right)\int_{1-|w|^2\le 1-|z|^2}\frac{(1-|w|^2)^{\beta-k}}{|1-\langle z,w\rangle|^{n+2+\beta+\alpha}}d\nu_\alpha(w)\\
 &\lesssim& (1-|z|^2)^{-k-1}\rho\left((1-|z|^2)^{n+1+\alpha}\right).
 \Eeas
 To handle the integral $I_2$, we use (\ref{rho-uppertype}) and \cite[Proposition 1.4.10]{R} once more, and we obtain
 \Beas
 I_2 &=& \int_{1-|w|^2> 1-|z|^2}\frac{\rho\left((1-|w|^2)^{n+1+\alpha}\right)(1-|w|^2)^{\beta-k}}{|1-\langle z,w\rangle|^{n+2+\beta+\alpha}}d\nu_\alpha(w)\\ &\le& C\frac{\rho\left((1-|z|^2)^{n+1+\alpha}\right)}{(1-|z|^2)^{(n+1+\alpha)\gamma}}\int_{\mathbb B^n}\frac{(1-|w|^2)^{\beta-k+(n+1+\alpha)\gamma}}{|1-\langle z,w\rangle|^{n+2+\beta+\alpha}}d\nu_\alpha(w)\\ &\le& C\frac{\rho\left((1-|z|^2)^{n+1+\alpha}\right)}{(1-|z|^2)^{(n+1+\alpha)\gamma}}\times (1-|z|^2)^{-k-1+(n+1+\alpha)\gamma}\\ &=& C(1-|z|^2)^{-k-1}\rho\left((1-|z|^2)^{n+1+\alpha}\right).
 \Eeas
 We conclude that there is also a constant $C>0$ such that for any $z\in \mathbb B^n$,
 $$|R^{k+1}f(z)|\le C(1-|z|^2)^{-k-1}\rho\left((1-|z|^2)^{n+1+\alpha}\right).$$

Now, let us suppose that there is a constant $C>0$ such that for some integer $k>(n+1+\alpha)\gamma$ with $\gamma$ being the upper-type of $\rho$, and any $z\in \mathbb B^n$,
\Bea\label{converse:independentofk}
|R^{k+1}f(z)|\le C(1-|z|^2)^{-k-1}\rho\left((1-|z|^2)^{n+1+\alpha}\right).
\Eea
We will show that this implies that we can find a constant $\tilde {C}>0$ such that for any $z\in \mathbb B^n$,
$$|R^{k}f(z)|\le \tilde {C}(1-|z|^2)^{-k}\rho\left((1-|z|^2)^{n+1+\alpha}\right).$$
We start by the following lemma.
\blem
Let $\rho$ be a weight. Then for any integer $k>(n+1+\alpha)\gamma$, there is a constant $\tilde {C}$ depending on $n,\alpha,C,\gamma$ where $C$ and $\gamma$ are as in (\ref{rho-uppertype}), such that for any $0<t<1$,
\Be\label{eq:dinitype}
\int_t^1\frac{\rho(s^{n+1+\alpha})}{s^{k+1}}ds\le \tilde {C}\frac{\rho(t^{n+1+\alpha})}{t^{k}}.
\Ee
\elem
\begin{proof}
Let $N$ be the smallest integer such that $t2^{N+1}\ge 1$. Then we have using the monotonicity of the function $\rho$ and (\ref{rho-uppertype}) that
\Beas
\int_t^1\frac{\rho(s^{n+1+\alpha})}{s^{k+1}}ds &\le& \sum_{l=0}^N\int_{t2^l}^{t2^{l+1}}\frac{\rho(s^{n+1+\alpha})}{s^{k+1}}ds\\ &\le& \sum_{l=0}^N\frac{\rho(t^{n+1+\alpha}2^{(l+1)(n+1+\alpha)})}{t^{k+1}2^{l(k+1)}}2^lt\\ &=& \sum_{l=0}^N\frac{\rho(t^{n+1+\alpha}2^{(l+1)(n+1+\alpha)})}{t^{k}2^{lk}}\\ &\le& C2^{(n+1+\alpha)\gamma}\sum_{l=0}^N\frac{\rho(t^{n+1+\alpha})}{t^{k}}2^{-l(k-(n+1+\alpha)\gamma)}\\ &=& C2^{(n+1+\alpha)\gamma}\frac{\rho(t^{n+1+\alpha})}{t^{k}}\sum_{l=0}^N2^{-l(k-(n+1+\alpha)\gamma)}\\ &\le& \tilde {C}\frac{\rho(t^{n+1+\alpha})}{t^{k}}.
\Eeas
The proof of the lemma is complete.
\end{proof}
Coming back to our proof, we first consider the case of $|z|>1/2$, $z=r\xi$, $\xi\in \mathbb S^n$. We recall the following identity for any holomorphic function $g$:
\Be\label{eq:gradientidentity}
g(z)-g(r\xi /2)=\int_{1/2}^1\frac{Rg(tz)}{t}dt.
\Ee
Applying (\ref{eq:gradientidentity}) with $g=R^kf$, we obtain using (\ref{eq:dinitype}) that for $|z|>1/2$,
\Beas
|R^kf(z)-R^kf(r\xi /2)| &\le& \int_{1/2}^1\frac{\left|R^{k+1}g(tz)\right|}{t}dt\\ &\le& C\int_{1/2}^1(1-t|z|)^{-k-1}\rho\left((1-t|z|)^{n+1+\alpha}\right)dt\\
 &\le& C\int_{1-|z|}^1\frac{\rho(s^{n+1+\alpha})}{s^{k+1}}ds\\
 &\le& C_1(1-|z|^2)^{-k}\rho\left((1-|z|^2)^{n+1+\alpha}\right).
\Eeas
Hence for $|z|>1/2$,
$$|R^kf(z)|\le S(1/2)+C_1(1-|z|^2)^{-k}\rho\left((1-|z|^2)^{n+1+\alpha}\right)$$
with $$S(1/2)=\max_{|z|\le 1/2}|R^kf(z)|.$$
Next for $|z|\le 1/2$, applying the mean value property to $R^kf(z)-R^kf(0)$, and (\ref{converse:independentofk}), we obtain
\Bea\label{eq:estimate:small:z}
 \max_{|z|\le 1/2}|R^kf(z)|^2 &=& \max_{|z|\le 1/2}|R^kf(z)-R^kf(0)|^2 \nonumber \\
&\le& 4^n\int_{|w|\le 3/4}|R^kf(w)-R^kf(0)|^2d\nu(w) \nonumber \\
&\le& 4^n\int_{|w|\le 3/4}|R^{k+1}f(w)|^2d\nu(w) \nonumber \\
&\le&
3^n\max_{|z|\le 3/4}|R^{k+1}f(z)|^2 \nonumber \\
 &\le& 3^nC^2\max_{|z|\le 3/4}(1-|z|^2)^{-2k}\rho\left((1-|z|^2)^{n+1+\alpha}\right)^2 \nonumber \\
 &\le& 3^nC^2\rho(1)^2.
\Eea
Hence using the latter and the fact that if $C_0$ is the constant in (\ref{rho-uppertype}), then for $z\in \mathbb B^n$,
\begin{multline}\label{eq:inverserhoupper}
(1-|z|^2)^{-k}\rho\left((1-|z|^2)^{n+1+\alpha}\right)\\
\ge (1-|z|^2)^{-(n+1+\alpha)\gamma}\rho\left((1-|z|^2)^{n+1+\alpha}\right)\ge \frac{\rho(1)}{C_0},
\end{multline}
we obtain
\Be\label{eq:estimateofS}
S(1/2)=\max_{|z|\le 1/2}|R^kf(z)|\le \left((\sqrt {3})^nC\right)C_0(1-|z|^2)^{-k}\rho\left((1-|z|^2)^{n+1+\alpha}\right).
\Ee
Thus for $|z|>1/2$,
\Be\label{eq:estimateforbigmod}
|R^kf(z)|\le C_1(1-|z|^2)^{-k}\rho\left((1-|z|^2)^{n+1+\alpha}\right).
\Ee

For $|z|\le 1/2$, using (\ref{eq:estimate:small:z}) and (\ref{eq:inverserhoupper}) we have
that for $|z|\le 1/2$,

\Beas
|R^kf(z)| &\lesssim& C_2(1-|z|^2)^{-k}\rho\left((1-|z|^2)^{n+1+\alpha}\right).
\Eeas
Thus taking $\tilde C=\max\{C_1,C_2\}$, we obtain that for any $z\in \mathbb B^n$,
$$|R^kf(z)|\le \tilde {C}(1-|z|^2)^{-k}\rho\left((1-|z|^2)^{n+1+\alpha}\right).$$
The proof is complete.
\end{proof}

As a consequence, the spaces, $\Gamma_{\alpha,\rho}(\mathbb B^n)$, $L\Gamma_{\alpha,\rho}(\mathbb B^n)$ 
become Banach spaces under the following norms
\begin{eqnarray*}
||f||_{\Gamma_{\alpha,\rho}(\mathbb B^n)}&=& |f(0)|+ \sup_{z\in\mathbb B^n}\frac{|R^k f(z)|(1-|z|^2)^{k}}{\rho\left((1-|z|^2)^{n+1+\alpha}\right)}\\
||f||_{L\Gamma_{\alpha,\rho}(\mathbb B^n)}&=& |f(0)|+ \sup_{z\in\mathbb B^n}\frac{|R^k f(z)|(1-|z|^2)^{k}}{\rho\left((1-|z|^2)^{n+1+\alpha}\right)}|\log(1-|z|^2)|,
\end{eqnarray*}
where $k$ is a fixed integer strictly greater than $\gamma (n+1+\alpha)$.

We will show that the space  $\Gamma_{\alpha,\rho}(\mathbb B^n)$ is the topological dual space of $\mathcal A_{\alpha}^{\Phi}(\mathbb B^n)$  when $\rho$ and $\Phi$ are related by
 $$\rho(t):=\frac{1}{t\Phi^{-1}(1/t)}.$$
We recall the following fact from \cite[Proposition 3.10]{V}.
\begin{proposition} Let $\Phi\in \mathscr{L}_p$ and $\rho(t):=\frac{1}{t\Phi^{-1}(1/t)}$, then $\rho$ is a weight of upper type $\frac 1p -1$.
\end{proposition}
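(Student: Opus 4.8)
The plan is to push everything through the inverse function. Since $\Phi\in\mathscr{L}_p$, Proposition~\ref{phiandinverse} gives $\Phi^{-1}\in\mathscr{U}^{1/p}$, so I may use two facts about $\psi:=\Phi^{-1}$: it is of upper type $1/p$, i.e.\ there is $C>0$ with $\psi(\tau\sigma)\le C\tau^{1/p}\psi(\sigma)$ for all $\sigma>0$ and $\tau\ge 1$; and the quotient $s\mapsto \psi(s)/s$ is non-decreasing. The natural move is then the substitution $u=1/t$, which turns the definition of $\rho$ into the compact form
\[
\rho(t)=\frac{1}{t\,\Phi^{-1}(1/t)}=\frac{u}{\psi(u)},\qquad u=\tfrac1t .
\]
This single identity is the point around which the whole argument is organized.

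From this form I would read off the structural requirements of a weight. Monotonicity: since $s\mapsto\psi(s)/s$ is non-decreasing, $u\mapsto u/\psi(u)$ is non-increasing, and because $u=1/t$ decreases as $t$ increases, $\rho$ is (strictly) increasing in $t$. Continuity of $\rho$ on $[0,\infty)$ is immediate from the continuity and strict positivity of $\psi$ on $(0,\infty)$. For the surjectivity onto $[0,\infty)$ I would record the boundary behaviour: as $u\to 0$ and as $u\to\infty$ the non-decreasing ratio $\psi(u)/u$ tends respectively to $0$ and to $\infty$ (this is exactly where the assumption $p<1$ enters, the case $p=1$ being the degenerate one where $\psi$ is essentially linear), so $\rho(t)=u/\psi(u)$ sweeps out all of $[0,\infty)$. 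These verifications are routine once the identity above is in place.

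The substantive computation is the upper-type estimate, and it is short. For $s>1$ with $st\le 1$ I would simply compute the ratio and apply the upper type of $\psi$ with $\tau=s\ge 1$ and $\sigma=\tfrac{1}{st}$:
\[
\frac{\rho(st)}{\rho(t)}=\frac{t\,\Phi^{-1}(1/t)}{st\,\Phi^{-1}(1/(st))}=\frac1s\cdot\frac{\psi\!\left(s\cdot\frac{1}{st}\right)}{\psi\!\left(\frac{1}{st}\right)}\le\frac1s\cdot C s^{1/p}=C\,s^{\frac1p-1}.
\]
Hence $\rho(st)\le C s^{1/p-1}\rho(t)$, which is precisely the statement that $\rho$ is of upper type $\frac1p-1$. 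I do not expect any genuine obstacle here: the whole mechanism is the one clean cancellation in which the explicit factor $1/s$ from the definition of $\rho$ absorbs one power of $s$ from the upper-type bound $s^{1/p}$ for $\Phi^{-1}$, leaving exactly the exponent $\frac1p-1$. The only point demanding a little care is the surjectivity at the endpoints, which is where the sharpness of the exponent $p$ (namely $p=a_\Phi<1$) is used; everything else is bookkeeping around the identity $\rho(t)=u/\psi(u)$.
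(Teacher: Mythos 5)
The paper does not actually prove this proposition: it is quoted verbatim from \cite[Proposition 3.10]{V}, so there is no internal argument to compare yours against. Judged on its own, the substantive part of your proof is correct and is the natural one. Writing $\psi=\Phi^{-1}$, invoking Proposition \ref{phiandinverse} to get $\psi\in\mathscr{U}^{1/p}$, and computing
$$\frac{\rho(st)}{\rho(t)}=\frac{1}{s}\cdot\frac{\psi(1/t)}{\psi(1/(st))}\le \frac{1}{s}\,Cs^{1/p}=Cs^{\frac1p-1}$$
is exactly the one cancellation that produces the exponent $\frac1p-1$, and the monotonicity and continuity of $\rho$ do follow from the identity $\rho(t)=u/\psi(u)$, $u=1/t$, together with the fact that $\psi(u)/u$ is non-decreasing.

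The one place you over-claim is surjectivity. From $\Phi\in\mathscr{L}_p$ alone, even with $p=a_\Phi<1$, you cannot conclude that $\psi(u)/u\to 0$ as $u\to 0$ and $\to\infty$ as $u\to\infty$: the lower-type condition controls the infimum of $t\Phi'(t)/\Phi(t)$ but not its behaviour at both ends. For instance, a concave $\Phi$ with $\Phi(t)=t$ for $t\le 1$ and $\Phi(t)=t^p$ for $t\ge 1$ lies in $\mathscr{L}_p$ (its lower index is $p$ and $\Phi(t)/t$ is non-increasing), yet the resulting $\rho$ equals $t^{1/p-1}$ on $(0,1]$ and is identically $1$ on $[1,\infty)$, hence is not onto $[0,\infty)$. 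So the limit statements you assert ``from the sharpness of $p$'' are not justified, and the proposition as literally stated (``$\rho$ is a weight'', with the paper's definition requiring $\rho$ to map $[0,\infty)$ onto itself) is not a consequence of the hypotheses. This is more a defect of the paper's definition than of your argument --- everywhere in the paper only the restriction of $\rho$ to $(0,1]$ and the upper-type inequality (\ref{rho-uppertype}) are used --- but you should either prove the endpoint limits under an extra hypothesis or drop the surjectivity claim rather than assert it.
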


 We will need the following differential operator of order $k$:
$$M_k^{\alpha}=[(n+k+\alpha)I+R]\cdots [(n+1+\alpha)I+R]$$ where $I$ is the
identity operator, $k\in \mathbb N^*$. 
The following lemma
can be proved using integration by parts.

\blem\label{integrationparts} Let $f,g$ be holomorphic
polynomials on $\mathbb B^n$. Then the following equality
holds 
$$\int_{\mathbb B^n}f(z)\overline
g(z)d\nu_\alpha(z) = C_{k,\alpha}\int_{\mathbb
B^n}f(z)\overline
{M_{k}^\alpha g(z)}(1-|z|^2)^{k}d\nu_\alpha (z),$$
where $C_{k,\alpha}$ is a constant depending only on $k$ and $\alpha$.
\elem

The following lemma is Lemma 2.2 of
\cite{BL}.
\blem\label{AlineLuo} Let $(a_j)$ be a sequence of positive numbers,
and let $L_k$ be the differential operator of order $k$
defined by
$$L_k:=(a_0I+R)(a_1I+R)\cdots (a_{k-1}I+R).$$
Then $f$ belongs to $\Gamma_\beta(\mathbb B^n)$ if and
only if there exist and integer $k>\beta$ and a positive
constant $C$ such that $$|L_kf(z)|\le
C(1-|z|^2)^{\beta-k}.$$\elem

As remarked in \cite{BL}, the equivalence in the above
lemma also holds if we multiply the right hand side of the
last inequality by a logarithmic terms. That is, for fixed $t\in \mathbb R$,
$$|R^kf(z)|\le C(1-|z|^2)^{\beta-k}|\log(1-|z|^2)|^t$$
if and only if $$|L_kf(z)|\le
C(1-|z|^2)^{\beta-k}|\log(1-|z|^2)|^t.$$

Using the same kind of techniques as in Proposition \ref{independance-ofk},
we can also show that for a weight $\rho$ of upper type $\gamma$, $f$ belongs to $\Gamma_{\alpha,\rho}(\mathbb B^n)$ 
if and only if there exist an integer $k>\gamma(n+1+\alpha)$ and a positive
constant $C$ such that $$|L_kf(z)|\le
C(1-|z|^2)^{-k}\rho\left((1-|z|^2)^{n+1+\alpha}\right).$$
The same is true for $L\Gamma_{\alpha,\rho}(\mathbb B^n)$.

As a consequence of this fact, we have that
\begin{eqnarray}
||f||_{\Gamma_{\alpha,\rho}(\mathbb B^n)}&\backsimeq & |f(0)|+\sup_{z\in\mathbb B^n}\frac{|M_k^\alpha f(z)|(1-|z|^2)^{k}}{\rho\left((1-|z|^2)^{n+1+\alpha}\right)}\\
||f||_{L\Gamma_{\alpha,\rho}(\mathbb B^n)}&\backsimeq & |f(0)|+\sup_{z\in\mathbb B^n}\frac{|M_k^\alpha f(z)|(1-|z|^2)^{k}}{\rho\left((1-|z|^2)^{n+1+\alpha}\right)}|\log(1-|z|^2)|
\end{eqnarray}
where $k$ is a fixed integer strictly greater than $\gamma (n+1+\alpha)$.
\vskip .4cm
\subsection{Some useful estimates}
The next proposition gives pointwise estimates for functions in
$\mathcal A_{\alpha}^{\Phi}(\mathbb B^n)$, $\Phi\in \mathscr{L}_p$.
\begin{lemma}\label{pointwise-concave}
Let $\Phi\in \mathscr{L}_p$ and $\alpha>-1$. There is a constant $C>1$ such that
 for any $f\in \mathcal A_{\alpha}^{\Phi}(\mathbb B^n)$,
\begin{equation}\label{eq:pointwise-concave}
|f(z)|\le C\Phi^{-1}\left(\frac 1{(1-|z|^2)^{n+1+\alpha}}\right)\|f\|_{\alpha,\Phi}^{lux}.
\end{equation}
\end{lemma}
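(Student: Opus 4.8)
The plan is to normalize, reduce the estimate to a sub‑mean value inequality for a cleverly chosen (pluri)subharmonic function, and then invert $\Phi$. First, by the homogeneity of the Luxemburg (quasi‑)norm it suffices to treat an $f$ with $\|f\|_{\alpha,\Phi}^{lux}=1$: by the definition \eqref{BergOrdef1} this gives $\int_{\mathbb B^n}\Phi(|f(z)|)\,d\nu_\alpha(z)\le 1$, and the general statement \eqref{eq:pointwise-concave} then follows by applying the normalized case to $f/\|f\|_{\alpha,\Phi}^{lux}$.

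The crucial observation is that, although $\Phi$ is (equivalent to) a concave function, the composition $\Phi(|f|)$ is still plurisubharmonic. Indeed, since $\Phi\in\mathscr{L}_p$, Remark \ref{about-indices} lets me write $\Phi(t)=\Phi_p(t^p)$ with $\Phi_p(t)=\Phi(t^{1/p})\in\mathscr{U}^q$ convex and non‑decreasing. As $f$ is holomorphic, $|f|^p$ is plurisubharmonic, and the composition of a convex non‑decreasing function with a plurisubharmonic function is plurisubharmonic; hence $\Phi(|f|)=\Phi_p(|f|^p)$ is plurisubharmonic on $\mathbb B^n$.

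Next I would exploit the sub‑mean value inequality over Bergman (Kobayashi) balls, which is what produces the correct non‑isotropic exponent. Fix $z\in\mathbb B^n$ and let $\varphi_z$ be the automorphism of $\mathbb B^n$ with $\varphi_z(0)=z$; then $\Phi(|f\circ\varphi_z|)$ is again plurisubharmonic, and the sub‑mean value property at the origin over a fixed ball $B(0,s)$ gives
\[
\Phi(|f(z)|)\le \frac{1}{\nu(B(0,s))}\int_{B(0,s)}\Phi\bigl(|f(\varphi_z(w))|\bigr)\,d\nu(w).
\]
Changing variables $u=\varphi_z(w)$, whose image is the Bergman ball $D(z,r)$ on which $1-|u|^2\backsimeq 1-|z|^2$ and whose Euclidean volume is $\backsimeq(1-|z|^2)^{n+1}$, and then passing from $d\nu$ to $d\nu_\alpha$, I obtain
\[
\Phi(|f(z)|)\le \frac{C}{(1-|z|^2)^{n+1+\alpha}}\int_{D(z,r)}\Phi(|f(u)|)\,d\nu_\alpha(u)\le \frac{C}{(1-|z|^2)^{n+1+\alpha}},
\]
the last step using $\int_{\mathbb B^n}\Phi(|f|)\,d\nu_\alpha\le 1$. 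Applying the increasing function $\Phi^{-1}$ and invoking Proposition \ref{phiandinverse}, which gives $\Phi^{-1}\in\mathscr{U}^{1/p}$, the upper type inequality \eqref{uppertype} (with exponent $1/p\ge 1$) yields $\Phi^{-1}(Ct)\lesssim\Phi^{-1}(t)$ for the constant $C\ge1$, so that
\[
|f(z)|\le \Phi^{-1}\!\left(\frac{C}{(1-|z|^2)^{n+1+\alpha}}\right)\le C'\,\Phi^{-1}\!\left(\frac{1}{(1-|z|^2)^{n+1+\alpha}}\right),
\]
which is \eqref{eq:pointwise-concave} after restoring the factor $\|f\|_{\alpha,\Phi}^{lux}$.

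The main obstacle is the second paragraph: the plurisubharmonicity of $\Phi(|f|)$ is not obvious because $\Phi$ is concave, and working directly with the concave $\Phi$ fails; the resolution is the factorization $\Phi(|f|)=\Phi_p(|f|^p)$ through the convex function $\Phi_p$. A second delicate point is purely geometric, namely that one must use the non‑isotropic Bergman balls (rather than Euclidean balls of radius $1-|z|$) so that the volume scales like $(1-|z|^2)^{n+1+\alpha}$ and produces exactly the exponent $n+1+\alpha$ appearing inside $\Phi^{-1}$.
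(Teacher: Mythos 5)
Your proposal is correct and follows essentially the same route as the paper: normalize the Luxemburg norm, use the convexity of $\Phi_p(t)=\Phi(t^{1/p})$ together with the plurisubharmonicity of $|f|^p$ to get a sub-mean value estimate yielding $\Phi(|f(z)|)\lesssim (1-|z|^2)^{-(n+1+\alpha)}$, and then invert $\Phi$ absorbing the constant via the upper type of $\Phi^{-1}$. The only cosmetic difference is that the paper averages against the probability measures $\left(\frac{1-|z|^2}{|1-\langle z,w\rangle|^2}\right)^{n+1+\alpha}d\nu_\alpha(w)$ on all of $\mathbb B^n$ rather than over a Bergman ball $D(z,r)$, which gives the same bound.
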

\begin{proof}
Let $f\in \mathcal A_{\alpha}^{\Phi}(\mathbb B^n)$. Note that if $\|f\|_{\alpha,\Phi}^{lux}=0$, then $f=0$ a.e and consequently, we obviously have (\ref{eq:pointwise-concave}). Let us assume that $\|f\|_{\alpha,\Phi}^{lux}\neq 0$, and let $\lambda>0$ such that $\int_{\mathbb B^n}\Phi\left(\frac{|f(z)|}{\lambda}\right)d\nu_\alpha(z)\leq 1.$ Using the fact that $\Phi_p(t):=\Phi(t^{1/p})$ is convex (see  Remark \ref{about-indices}), that $\frac{|f|^p}{\lambda^p}$ is subharmonic, and that the measures $\left(\frac{1-|z|^2}{|1-\langle z,w\rangle|^2}\right)^{n+1+\alpha}d\nu_\alpha(w)$ are probability measures (see \cite{R}), we obtain, for $z\in\mathbb B^n$,
\begin{eqnarray*}
    \Phi_p\left(\frac{|f(z)|^p}{\lambda^p}\right) &\leq & \int_{\mathbb B^n}\Phi_p\left(\frac{|f(w)|^p}{\lambda^p}\right)\left(\frac{1-|z|^2}{|1-\langle z,w\rangle|^2}\right)^{n+1+\alpha}d\nu_\alpha(w)\\
&\leq & \left( \frac{4}{1-|z|^2}\right)^{n+1+\alpha}\int_{\mathbb B^n}\Phi\left(\frac{|f(w)|}{\lambda}\right)d\nu_\alpha(w)\\
&\leq & C \frac{1}{\left(1-|z|^2\right)^{n+1+\alpha}}.
\end{eqnarray*}
Hence, for $z\in\mathbb B^n$, we have
\begin{eqnarray*}
    |f(z)|^p\leq C \lambda^p\left(\Phi_p\right)^{-1}\left(\frac{1}{\left(1-|z|^2\right)^{n+1+\alpha}}\right)\leq C \lambda^p\left(\Phi^{-1}\left(\frac{1}{\left(1-|z|^2\right)^{n+1+\alpha}}\right)\right)^p.
\end{eqnarray*}
From this, we have (\ref{eq:pointwise-concave}).
\end{proof}
We also provide norm estimates for bounded functions in $\mathcal A_{\alpha}^{\Phi}(\mathbb B^n)$. These are extension of the same type of result in \cite[Lemma 3.9]{LLQR}.
\begin{lemma}\label{bounded-concave}
Let $\alpha>-1$ and $\Phi\in \mathscr{L}_p$. For any bounded holomorphic function $f$ in $\mathbb B^n$, one has:
\begin{equation}\label{eq:bounded-concave}
\|f\|_{\alpha,\Phi}^{lux}\leq \frac{||f||_\infty}{\Phi^{-1}\left(\frac {||f||_\infty^p}{||f||_{\alpha,p}^p}\right)}.
\end{equation}
\end{lemma}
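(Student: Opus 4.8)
The plan is to read the estimate straight off the definition (\ref{BergOrdef1}) of the Luxemburg quasi-norm: by the infimum there, it suffices to exhibit a single admissible $\lambda$, namely the candidate
$$\lambda_0:=\frac{||f||_\infty}{\Phi^{-1}\left(\frac{||f||_\infty^p}{||f||_{\alpha,p}^p}\right)},$$
and to verify that $\int_{\mathbb B^n}\Phi\left(\frac{|f(z)|}{\lambda_0}\right)d\nu_\alpha(z)\le 1$. I would first dispose of the case $f\equiv 0$ and assume $f\not\equiv 0$. Since $\nu_\alpha$ is a probability measure, $||f||_{\alpha,p}\le ||f||_\infty$, so the quotient $||f||_\infty^p/||f||_{\alpha,p}^p\ge 1$ and $A:=\Phi^{-1}\left(||f||_\infty^p/||f||_{\alpha,p}^p\right)$ is a well-defined positive number.

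The heart of the argument is a scaling inequality coming from the lower type. Writing $u(z):=|f(z)|/||f||_\infty\le 1$, one has $|f(z)|/\lambda_0=u(z)A\le A$. Here I would invoke Lemma \ref{indices} together with Remark \ref{about-indices} (which identifies $p$ with the lower index $a_\Phi$, after replacing $\Phi$ by an equivalent $\mathcal C^1$ function) to the effect that $t\mapsto \Phi(t)/t^p$ is non-decreasing. Applied at $u(z)A\le A$, this gives $\Phi(u(z)A)\le u(z)^p\,\Phi(A)$, i.e. the lower-type estimate with constant one. Since $\Phi$ and $\Phi^{-1}$ are genuine inverses, $\Phi(A)=||f||_\infty^p/||f||_{\alpha,p}^p$, whence the pointwise bound
$$\Phi\left(\frac{|f(z)|}{\lambda_0}\right)\le \left(\frac{|f(z)|}{||f||_\infty}\right)^p\frac{||f||_\infty^p}{||f||_{\alpha,p}^p}=\frac{|f(z)|^p}{||f||_{\alpha,p}^p}.$$

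Integrating this against $d\nu_\alpha$ yields $\int_{\mathbb B^n}\Phi(|f(z)|/\lambda_0)\,d\nu_\alpha(z)\le ||f||_{\alpha,p}^{-p}\int_{\mathbb B^n}|f(z)|^p\,d\nu_\alpha(z)=1$, so $\lambda_0$ is admissible in (\ref{BergOrdef1}) and the conclusion $\|f\|_{\alpha,\Phi}^{lux}\le \lambda_0$ follows, proving (\ref{eq:bounded-concave}). The one delicate point — and the reason the clean constant-free form of (\ref{eq:bounded-concave}) is available — is the constant-one lower-type inequality $\Phi(ut)\le u^p\Phi(t)$ for $0<u\le 1$: the defining property (\ref{eq:lowertype}) only supplies it with some constant $C$, so one genuinely needs the monotonicity statement of Lemma \ref{indices} (equivalently, that $p=a_\Phi$ is the largest lower type) to remove $C$; using (\ref{eq:lowertype}) directly would leave a spurious multiplicative constant on the right-hand side.
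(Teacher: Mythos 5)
Your proof is correct and follows essentially the same route as the paper, which simply cites the convex-case argument of \cite{LLQR} adapted via the convexity of $\Phi_p(t)=\Phi(t^{1/p})$: testing the candidate $\lambda_0$ in the Luxemburg infimum and using the constant-one lower-type inequality $\Phi(ut)\le u^p\Phi(t)$ is exactly that adaptation, since that inequality is the monotonicity of $\Phi(t)/t^p$ from Lemma \ref{indices}, i.e.\ the statement that $\Phi_p(s)/s$ is non-decreasing. You have in fact written out in full the details the paper leaves implicit, including the one genuinely delicate point (why the constant $C$ from (\ref{eq:lowertype}) can be dispensed with).
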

\begin{proof}
The proof follows exactly as in \cite{LLQR} where we use instead the fact that $\Phi_p(t):=\Phi(t^{1/p})$ is convex (see remark \ref{about-indices}).
\end{proof}
We also get the following estimates for bounded holomorphic functions in $\mathcal A_{\alpha}^{\Phi}(\mathbb B^n)$, when $\Phi\in \mathscr{U}^q$. The proof follows exactly as in \cite{LLQR}.
\begin{lemma}\label{bounded-convex}
Let $\alpha>-1$ and $\Phi\in \mathscr{U}^q$. Let $0<s<\infty$. Put $\Phi_s(t)=\Phi(t^s)$ and $\Phi^s(t)=(\Phi(t))^s$. For any bounded holomorphic function $f$ in $\mathbb B^n$, one has:
\begin{equation}\label{eq:bounded-convex1}
\|f\|_{\alpha,\Phi^s}^{lux}\leq \frac{||f||_\infty}{\Phi^{-1}\left(\frac {||f||_\infty}{||f||_{\alpha,s}}\right)}.
\end{equation}
and
\begin{equation}\label{eq:bounded-convex2}
\|f\|_{\alpha,\Phi_s}^{lux}\leq \left(\frac{||f||_\infty^s}{\Phi^{-1}\left(\frac {||f||_\infty^s}{||f||_{\alpha,s}^s}\right)}\right)^{1/s}.
\end{equation}
\end{lemma}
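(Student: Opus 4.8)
The plan is to verify each Luxemburg estimate directly from the definition \eqref{BergOrdef1}, by exhibiting an explicit admissible $\lambda$ and checking that the associated integral is at most $1$. The one structural fact I would use is that, since $\Phi\in\mathscr{U}^q$, the ratio $t\mapsto\Phi(t)/t$ is non-decreasing; equivalently, $\Phi(u)\le\frac{u}{v}\Phi(v)$ whenever $0\le u\le v$. Since $\nu_\alpha$ is a probability measure and $f$ is bounded, $\|f\|_{\alpha,s}\le\|f\|_\infty<\infty$, so the arguments $\|f\|_\infty/\|f\|_{\alpha,s}$ and $\|f\|_\infty^s/\|f\|_{\alpha,s}^s$ are at least $1$ and $\Phi^{-1}$ is well defined and positive there. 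The case $f\equiv0$ being trivial, I may assume $\|f\|_{\alpha,s}>0$.

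For \eqref{eq:bounded-convex1} I would set $\lambda:=\|f\|_\infty/\Phi^{-1}\!\left(\|f\|_\infty/\|f\|_{\alpha,s}\right)$, so that $\|f\|_\infty/\lambda=\Phi^{-1}\!\left(\|f\|_\infty/\|f\|_{\alpha,s}\right)$. Applying $\Phi(u)\le\frac{u}{v}\Phi(v)$ with $u=|f(z)|/\lambda$ and $v=\|f\|_\infty/\lambda$, which is legitimate because $|f(z)|\le\|f\|_\infty$, gives
$$
\Phi\!\left(\frac{|f(z)|}{\lambda}\right)\le\frac{|f(z)|}{\|f\|_\infty}\,\Phi\!\left(\frac{\|f\|_\infty}{\lambda}\right)=\frac{|f(z)|}{\|f\|_\infty}\cdot\frac{\|f\|_\infty}{\|f\|_{\alpha,s}}.
$$
Raising to the power $s$ and integrating against $d\nu_\alpha$ yields $\int_{\mathbb B^n}\bigl(\Phi(|f|/\lambda)\bigr)^s\,d\nu_\alpha\le\|f\|_{\alpha,s}^{-s}\int_{\mathbb B^n}|f|^s\,d\nu_\alpha=1$, so $\lambda$ is admissible for $\Phi^s$ and \eqref{eq:bounded-convex1} follows.

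For \eqref{eq:bounded-convex2} the argument is identical after replacing the quantities by their $s$-th powers. I would put $\mu:=\left(\|f\|_\infty^s/\Phi^{-1}\!\left(\|f\|_\infty^s/\|f\|_{\alpha,s}^s\right)\right)^{1/s}$, so that $(\|f\|_\infty/\mu)^s=\Phi^{-1}\!\left(\|f\|_\infty^s/\|f\|_{\alpha,s}^s\right)$, and then apply $\Phi(u)\le\frac{u}{v}\Phi(v)$ with $u=(|f(z)|/\mu)^s$ and $v=(\|f\|_\infty/\mu)^s$. This gives
$$
\Phi_s\!\left(\frac{|f(z)|}{\mu}\right)=\Phi\!\left(\Bigl(\frac{|f(z)|}{\mu}\Bigr)^s\right)\le\frac{|f(z)|^s}{\|f\|_\infty^s}\,\Phi\!\left(\Bigl(\frac{\|f\|_\infty}{\mu}\Bigr)^s\right)=\frac{|f(z)|^s}{\|f\|_\infty^s}\cdot\frac{\|f\|_\infty^s}{\|f\|_{\alpha,s}^s},
$$
and integrating again produces an integral equal to $1$, whence $\mu$ is admissible for $\Phi_s$.

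The computation presents no genuine obstacle; the only point requiring care is the choice of the normalizing constants $\lambda$ (resp.\ $\mu$), together with the observation that it is the monotonicity of $\Phi(t)/t$---rather than convexity alone---that converts the pointwise bound $|f(z)|\le\|f\|_\infty$ into the integrable estimate above. This is exactly the mechanism used in \cite[Lemma 3.9]{LLQR}, and I would simply transcribe it in the present weighted setting.
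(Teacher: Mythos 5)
Your argument is correct and is precisely the mechanism the paper has in mind: its own proof of this lemma is a one-line deferral to \cite[Lemma 3.9]{LLQR}, and your direct verification---choosing the explicit Luxemburg candidate $\lambda$ (resp.\ $\mu$) and using that $t\mapsto\Phi(t)/t$ is non-decreasing to pass from $|f(z)|\le\|f\|_\infty$ to an integral bounded by $1$---is exactly that computation, written out in the weighted setting with the correct reading $\|f\|_{\alpha,s}^s=\int_{\mathbb B^n}|f|^s\,d\nu_\alpha$. No gaps.
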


\section{Duality for large Bergman-Orlicz spaces}\label{section3}
The following Lemma generalizes the inequality (\ref{eq:intestimsmall}). As in the classical weighted Bergman spaces, this Lemma is crucial to characterize the dual space $\left(\mathcal A_{\alpha}^{\Phi}(\mathbb B^n)\right)^*$ of $\mathcal A_{\alpha}^{\Phi}(\mathbb B^n).$

\begin{lemma}\label{lem:embedconcave}
Let $\alpha>-1$, $\Phi\in \mathscr{L}_p$ and $\rho(t)=\frac{1}{t\Phi^{-1}(1/t)}$.  There is a constant $C>1$ such that for any $f\in \mathcal A_{\alpha}^{\Phi}(\mathbb B^n)$,
\begin{equation}\label{eq:embedconcave}
\int_{\mathbb B^n}|f(z)|\rho\left((1-|z|^2)^{n+1+\alpha}\right)d\nu_\alpha(z)\le C\|f\|_{\alpha,\Phi}^{lux}.
\end{equation}
\end{lemma}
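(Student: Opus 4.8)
The plan is to exploit the key structural fact, recorded in Remark~\ref{about-indices}, that for $\Phi\in\mathscr{L}_p$ the function $\Phi_p(t)=\Phi(t^{1/p})$ lies in $\mathscr{U}^q$ and may be taken convex, together with the pointwise estimate of Lemma~\ref{pointwise-concave}. The idea is to decompose the modulus $|f(z)|$ by raising it to a power $p$ and compensating: write $|f(z)|=|f(z)|^{1-p}\cdot |f(z)|^p$, bound the factor $|f(z)|^{1-p}$ pointwise using Lemma~\ref{pointwise-concave}, and then integrate the remaining factor $|f(z)|^p$ against the appropriate weight. Concretely, normalise so that $\|f\|_{\alpha,\Phi}^{lux}=1$, which by definition of the luxemburg norm gives $\int_{\mathbb B^n}\Phi(|f(z)|)\,d\nu_\alpha(z)\le 1$.

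First I would apply Lemma~\ref{pointwise-concave} to the factor $|f(z)|^{1-p}$, obtaining
\begin{equation*}
|f(z)|^{1-p}\le C^{1-p}\left(\Phi^{-1}\!\left(\tfrac{1}{(1-|z|^2)^{n+1+\alpha}}\right)\right)^{1-p},
\end{equation*}
using $\|f\|_{\alpha,\Phi}^{lux}=1$. The crucial observation is that this bound interacts favourably with the weight $\rho$. Indeed, recalling $\rho(t)=\frac{1}{t\,\Phi^{-1}(1/t)}$ and setting $t=(1-|z|^2)^{n+1+\alpha}$, one has
\begin{equation*}
\rho\!\left((1-|z|^2)^{n+1+\alpha}\right)=\frac{(1-|z|^2)^{-(n+1+\alpha)}}{\Phi^{-1}\!\left((1-|z|^2)^{-(n+1+\alpha)}\right)}.
\end{equation*}
Multiplying this against the pointwise bound for $|f(z)|^{1-p}$ produces, after collecting the powers of $\Phi^{-1}$, a factor $\left(\Phi^{-1}(1/t)\right)^{-p}$ times $t^{-1}$. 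The plan is then to recognise, via the relation between $\Phi$ and $\Phi^{-1}$ (so that $\Phi^{-1}(1/t)^{-p}\,t^{-1}$ rewrites as a constant times $\Phi$ evaluated appropriately), that the surviving weight is comparable to $\Phi(|f(z)|)/|f(z)|^p$ when paired with the remaining factor $|f(z)|^p$. This is where the identity $\Phi\!\left(\Phi^{-1}(s)\right)=s$ and the convexity/monotonicity of $\Phi_p$ should be invoked.

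Thus, after this reduction, the integral in~\eqref{eq:embedconcave} is controlled by a constant multiple of $\int_{\mathbb B^n}\Phi(|f(z)|)\,d\nu_\alpha(z)\le 1=\|f\|_{\alpha,\Phi}^{lux}$, which is exactly the desired inequality; a scaling argument removes the normalisation. I expect the main obstacle to be the bookkeeping in the second paragraph: verifying rigorously that the product of the pointwise bound on $|f(z)|^{1-p}$ with the weight $\rho\!\left((1-|z|^2)^{n+1+\alpha}\right)$ is dominated by $\Phi(|f(z)|)\,|f(z)|^{-p}$ uniformly in $z$. This amounts to checking an inequality of the form $\Phi^{-1}(1/t)^{-p}\,t^{-1}\cdot |f|^{1-p}\lesssim \Phi(|f|)\,|f|^{-p}$, which must be handled by splitting into the two regimes where $|f(z)|$ is large or small relative to $\Phi^{-1}(1/t)$ and using the lower-type property~\eqref{eq:lowertype} of $\Phi$ (equivalently the monotonicity of $\Phi(t)/t$ and Lemma~\ref{indices}). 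Once this pointwise domination is secured, the integration is immediate.
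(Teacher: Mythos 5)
Your overall strategy (normalise so that $\int_{\mathbb B^n}\Phi(|f|)\,d\nu_\alpha\le 1$ and feed in the pointwise estimate of Lemma \ref{pointwise-concave}) is the right one, but the specific decomposition $|f|=|f|^{1-p}\cdot|f|^{p}$ opens a gap that your cited tools cannot close. After bounding $|f|^{1-p}\le C\bigl(\Phi^{-1}(1/t)\bigr)^{1-p}$ with $t=(1-|z|^2)^{n+1+\alpha}$, the pointwise domination you still need is, writing $A=\Phi^{-1}(1/t)$ and $|f(z)|=sA$ with $0<s\le C$,
$$s^{p}\,\Phi(A)\lesssim \Phi(sA).$$
Both facts you propose to use go the wrong way here: Lemma \ref{indices} says $\Phi(u)/u^{p}$ is \emph{increasing}, which yields $\Phi(sA)\le s^{p}\Phi(A)$ for $s\le1$ (the reverse inequality), while the monotonicity of $t/\Phi(t)$ only yields $\Phi(sA)\ge s\,\Phi(A)$, which is strictly weaker than what you need since $s\le s^{p}$ for $s\le 1$. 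Demanding both $\Phi(sA)\le s^{p}\Phi(A)$ and $\Phi(sA)\gtrsim s^{p}\Phi(A)$ forces $\Phi(u)\backsimeq u^{p}$, i.e.\ your scheme is really the proof for the power case only. For a concrete failure take $\Phi(t)=t/\log(e+t)$ and $f\equiv\varepsilon$: at any fixed $z$ your required inequality reads $\varepsilon^{p}\Phi(A)/A^{p}\lesssim\Phi(\varepsilon)\backsimeq\varepsilon$, which fails as $\varepsilon\to0$ because $p<1$. The underlying problem is that replacing the whole factor $|f|^{1-p}$ by its pointwise maximum is too wasteful precisely where $|f|$ is much smaller than $\Phi^{-1}(1/t)$ (and for some $\Phi$, $\alpha$ the resulting weight $t^{-1}(\Phi^{-1}(1/t))^{-p}$ is not even integrable, so the defect cannot be absorbed at the integral level either).

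The repair is a different splitting, which is what the paper does: write $|f|=\Phi(|f|)\cdot\frac{|f|}{\Phi(|f|)}$ (applied to $f/\lambda$), and bound the second factor using that $u\mapsto u/\Phi(u)$ is non-decreasing together with $|f|\le C\Phi^{-1}(1/t)$. This gives
$$\frac{|f|}{\Phi(|f|)}\le C\,\frac{\Phi^{-1}(1/t)}{1/t}=\frac{C}{\rho(t)},$$
so the weight $\rho$ cancels exactly, $|f|\,\rho(t)\le C\,\Phi(|f|)$ holds pointwise, and integrating finishes the proof. Note that this argument uses only the monotonicity of $\Phi(t)/t$ and never the finer exponent $p$; that is the sign that it is the correct generalisation of the classical estimate (\ref{eq:intestimsmall}).
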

\begin{proof}
The idea of the proof is an adaptation of the proof in the classical Bergman spaces and make uses of the pointwise estimate of functions in $\mathcal A_{\alpha}^{\Phi}(\mathbb B^n)$.
More precisely, let $f\in \mathcal A_{\alpha}^{\Phi}(\mathbb B^n)$ and $\lambda>0$ such that $\int_{\mathbb B^n}\Phi\left(\frac{|f(z)|}{\lambda}\right)d\nu_\alpha(z)\leq 1.$ We have
\begin{multline*}
\int_{\mathbb B^n}|f(z)|\rho\left((1-|z|^2)^{n+1+\alpha}\right)d\nu_\alpha(z) \\= \lambda \int_{\mathbb B^n}\Phi\left(\frac{|f(z)|}{\lambda}\right)\frac{\frac{|f(z)|}{\lambda}}{\Phi\left(\frac{|f(z)|}{\lambda}\right)} \rho\left((1-|z|^2)^{n+1+\alpha}\right)d\nu_\alpha(z).
\end{multline*}
Using (\ref{eq:pointwise-concave}), we have that, for $z\in\mathbb B^n$,
\begin{equation*}
    \frac{|f(z)|}{\lambda}\le C\Phi^{-1}\left(\frac 1{(1-|z|^2)^{n+1+\alpha}}\right)\left\|\frac f\lambda\right\|_{\alpha,\Phi}^{lux}\leq C\Phi^{-1}\left(\frac 1{(1-|z|^2)^{n+1+\alpha}}\right).
\end{equation*}
From this, using the fact that $t/\Phi(t)$ and $\Phi$ are non-decreasing, we have
\begin{multline*}
\int_{\mathbb B^n}|f(z)|\rho\left((1-|z|^2)^{n+1+\alpha}\right)d\nu_\alpha(z) \\ \leq  C\lambda \int_{\mathbb B^n}\Phi\left(\frac{|f(z)|}{\lambda}\right)\frac{\Phi^{-1}\left(\frac 1{(1-|z|^2)^{n+1+\alpha}}\right)}{\frac 1{(1-|z|^2)^{n+1+\alpha}}} \rho\left((1-|z|^2)^{n+1+\alpha}\right)d\nu_\alpha(z).
\end{multline*}
The definition of $\rho$ leads to
\begin{multline*}
\int_{\mathbb B^n}|f(z)|\rho\left((1-|z|^2)^{n+1+\alpha}\right)d\nu_\alpha(z) \leq  C\lambda \int_{\mathbb B^n}\Phi\left(\frac{|f(z)|}{\lambda}\right)d\nu_\alpha(z)\leq C\lambda.
\end{multline*}
The proof is complete.
\end{proof}

We next prove the duality result which also extends the classical duality for Bergman spaces with small exponents.
%
\begin{proof}[Proof of Theorem \ref{lem:dualitysmall}]
First using Lemma \ref{integrationparts} and Lemma \ref{lem:embedconcave}, we obtain that, for any $f\in \mathcal A_{\alpha}^{\Phi}(\mathbb B^n)$ and $g\in \Gamma_{\alpha, \rho}(\mathbb B^n)$,
\Beas
\left|\langle f,g\rangle_\alpha\right| &:=& \lim_{r\rightarrow 1}\left|\int_{\mathbb B^n}f(rz)\overline {g(z)}d\nu_\alpha(z)\right|\\
 &\le& C_{k,\alpha}\lim_{r\rightarrow 1}\int_{\mathbb
B^n}|f(rz)|\overline
|{M_{k}^\alpha g(z)}|(1-|z|^2)^{k}d\nu_\alpha (z)\\
&\leq&C \lim_{r\rightarrow 1}\int_{\mathbb
B^n}|f(rz)|\rho\left((1-|z|^2)^{n+1+\alpha}\right)d\nu_\alpha(z)\\ &\le&  C\|f\|_{\alpha,\Phi}^{lux}.
\Eeas
That is any $g\in \Gamma_{\alpha, \rho}(\mathbb B^n)$ defines a linear form on $\mathcal A_{\alpha}^{\Phi}(\mathbb B^n)$ under the pairing (\ref{eq:dualpairing}).

Now that any linear form on $\mathcal A_{\alpha}^{\Phi}(\mathbb B^n)$ is given by (\ref{eq:dualpairing}) can be justified by (\ref{eq:doubleinclus}) and the duality result for the usual weighted Bergman spaces (see \cite{KZ}). Thus we finish by proving that any element $g$ in $\left(\mathcal A_{\alpha}^{\Phi}(\mathbb B^n)\right)^*$ belongs to $\Gamma_{\alpha, \rho}(\mathbb B^n)$. Let $a\in\mathbb B^n$. We will test (\ref{eq:dualpairing}) with the function
$$f_a(z)=\Phi^{-1}\left(\frac{1}{(1-|a|^2)^{n+1+\alpha}}\right) \frac{(1-|a|^2)^{n+1+\alpha+k}}{\left(1-\langle z,a\rangle\right)^{n+1+\alpha+k}}$$
where $k$ is a fixed integer satisfying $k>(n+1+\alpha)(\frac{1}{p}-1)$.

Using Lemma \ref{bounded-concave}  and  Forelli-Rudin estimates (see for example \cite[Proposition 1.4.10]{R}), we see that $f_a$ is uniformly in $\mathcal A_{\alpha}^{\Phi}(\mathbb B^n)$. That is there exists a constant $C$, independant of $a$, such that $||f_a||_{\alpha,\Phi}^{lux}\leq C$.  We then get
\Beas
C &\ge& \left|\langle f_a,g\rangle_\alpha\right|
\\ &=& \lim_{r\rightarrow 1}\Phi^{-1}\left(\frac{1}{(1-|a|^2)^{n+1+\alpha}}\right)(1-|a|^2)^{n+1+\alpha+k}\left|\int_{\mathbb B^n}\frac{\overline {g(z)}}{\left(1-\langle rz,a\rangle\right)^{n+1+\alpha+k}}d\nu_\alpha(z)\right|\\
 &=& \lim_{r\rightarrow 1}\frac{(1-|a|^2)^k}{\rho\left((1-|a|^2)^{n+1+\alpha}\right)}\left|\int_{\mathbb B^n}\frac{\overline {g(z)}}{\left(1-\langle rz,a\rangle\right)^{n+1+\alpha+k}}d\nu_\alpha(z)\right| \\
&=& \frac 1{C_{k,\alpha}}\frac{(1-|a|^2)^k}{\rho\left((1-|a|^2)^{n+1+\alpha}\right)}\lim_{r\rightarrow 1}\left|M_k^{\alpha}g(ra)\right|.
\Eeas
So, there exists a constant $C$ such that for any $a\in \mathbb B^n$, $$\frac{(1-|a|^2)^k}{\rho\left((1-|a|^2)^{n+1+\alpha}\right)}|M_k^{\alpha}g(a)|\le C.$$ This is equivalent to the fact that $g\in \Gamma_{\alpha, \rho}(\mathbb B^n)$. The proof is complete.
\end{proof}

\section{Hankel operators between Bergman-Orlicz spaces}
We have gathered the results we need to study the boundedness of the small Hankel Operators between Bergman-Orlicz spaces.
\subsection{Boundedness of $h_b:A_\alpha^{\Phi}(\mathbb B^n) \rightarrow A_\alpha^{\Phi}(\mathbb B^n)$, $\Phi\in \mathscr{U}^q$}

In this subsection, we consider boundedness of Hankel operators $h_b$ on the Bergman-Orlicz spaces $A_\alpha^{\Phi}(\mathbb B^n)$
for $\Phi$ a convex growth function in the class $\mathscr{U}^q$. We start by considering a general weak factorization of weighted
Bergman spaces with small exponents.

\begin{proposition}\label{prop:weakfactsmallexpo}
Let $\alpha>-1$, $0<p\le 1$ and $\Phi\in \mathscr U^q$. Denote by $\Psi$ the complementary function of $\Phi$. If we define $\Phi_p$ by $\Phi_p(t)=\Phi(t^p)$, then every function $f\in \mathcal A_\alpha^p(\mathbb B^n)$ admits the following representation
\begin{equation}\label{eq:weakfactsmallexpo}f(z)=\sum_{j}f_j(z)g_j(z),\,\,\,z\in \mathbb B^n,
\end{equation}
where each $f_j$ is in $\mathcal A_{\alpha}^{\Phi_p}(\mathbb B^n)$ and each $g_j$ is in $\mathcal A_{\alpha}^{\Psi_p}(\mathbb B^n)$. Moreover, we have
$$\sum_{j}\|f_j\|^{lux}_{\alpha,\Phi_p}\|g_j\|^{lux}_{\alpha,\Psi_p}\le C\|f\|_{p,\alpha},$$
where $C$ is a positive constant independent of $f$.
\end{proposition}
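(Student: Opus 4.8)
The plan is to reduce the factorization to the classical atomic decomposition of $\mathcal A_\alpha^p(\mathbb B^n)$, $0<p\le 1$, and to split each atom --- a power of the Bergman kernel --- into two kernel factors, one tuned to $\mathcal A_\alpha^{\Phi_p}(\mathbb B^n)$ and the other to $\mathcal A_\alpha^{\Psi_p}(\mathbb B^n)$. The guiding identity is a relation between the inverse functions: since $\Phi_p(t)=\Phi(t^p)$ and $\Psi_p(t)=\Psi(t^p)$ one has $(\Phi_p)^{-1}(s)=(\Phi^{-1}(s))^{1/p}$ and $(\Psi_p)^{-1}(s)=(\Psi^{-1}(s))^{1/p}$, and, combined with the classical inequality $s\le \Phi^{-1}(s)\Psi^{-1}(s)\le 2s$ valid for a convex $\Phi$ and its complementary $\Psi$, this gives
$$(\Phi_p)^{-1}(s)\,(\Psi_p)^{-1}(s)=\big(\Phi^{-1}(s)\Psi^{-1}(s)\big)^{1/p}\backsimeq s^{1/p}.$$
By Proposition \ref{prop:volbergtolokonnikov} the associated product Orlicz space is exactly $L_\alpha^p(\mathbb B^n)$ (as $\Theta^{-1}(s)\backsimeq s^{1/p}$ forces $\Theta(t)\backsimeq t^p$), so products $fg$ with $f\in\mathcal A_\alpha^{\Phi_p}$ and $g\in\mathcal A_\alpha^{\Psi_p}$ automatically lie in $\mathcal A_\alpha^p$ with norm control; the content of the proposition is the converse, which I now build by hand.

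First I would invoke the atomic decomposition of $\mathcal A_\alpha^p(\mathbb B^n)$: fixing $b$ with $pb>n+1+\alpha$, there is a separated sequence $\{a_k\}\subset\mathbb B^n$ such that every $f\in\mathcal A_\alpha^p$ can be written as
$$f(z)=\sum_k\lambda_k\,\frac{(1-|a_k|^2)^{b-(n+1+\alpha)/p}}{(1-\langle z,a_k\rangle)^b},\qquad \sum_k|\lambda_k|^p\le C\|f\|_{p,\alpha}^p.$$
Writing $s_k:=(1-|a_k|^2)^{-(n+1+\alpha)}$ and splitting $b=b_1+b_2$ with $pb_1>n+1+\alpha$ and $pb_2>n+1+\alpha$, I define
$$g_k(z)=(\Psi_p)^{-1}(s_k)\,\frac{(1-|a_k|^2)^{b_2}}{(1-\langle z,a_k\rangle)^{b_2}},\qquad f_k(z)=\frac{\lambda_k}{g_k(z)}\,\frac{(1-|a_k|^2)^{b-(n+1+\alpha)/p}}{(1-\langle z,a_k\rangle)^b}.$$
By construction $f_kg_k=\lambda_k\,(\text{$k$-th atom})$ is a power of the kernel, hence $f_k$ is holomorphic and $\sum_k f_kg_k=f$ with convergence inherited from the atomic series. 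A direct simplification, together with $s_k/\Psi^{-1}(s_k)\backsimeq\Phi^{-1}(s_k)$, shows $f_k(z)\backsimeq\lambda_k(\Phi_p)^{-1}(s_k)(1-|a_k|^2)^{b_1}(1-\langle z,a_k\rangle)^{-b_1}$, so $f_k$ is $\lambda_k$ times a normalized $\Phi_p$-atom and $g_k$ a normalized $\Psi_p$-atom.

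The crux is the pair of uniform bounds $\|f_k\|^{lux}_{\alpha,\Phi_p}\lesssim|\lambda_k|$ and $\|g_k\|^{lux}_{\alpha,\Psi_p}\lesssim1$, which I would obtain by estimating the Luxemburg functional directly, using only the convexity of $\Phi$ (resp. $\Psi$) --- crucially avoiding the $\bigtriangledown_2$ machinery, which may fail for $\Psi$. For $g_k$, set $t(z)=(1-|a_k|^2)^{pb_2}|1-\langle z,a_k\rangle|^{-pb_2}\lesssim1$; since $(\Psi_p)^{-1}(s_k)^p=\Psi^{-1}(s_k)$, for a large constant $C$ the quantity $\tfrac{\Psi^{-1}(s_k)}{C^p}t(z)$ is at most $\Psi^{-1}(s_k)$, so convexity ($\Psi(wu)\le w\Psi(u)$ for $w\le1$, as $\Psi(0)=0$) gives $\Psi_p(|g_k(z)|/C)\le C^{-p}t(z)\,s_k$. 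Integrating and applying the Forelli--Rudin estimate \cite[Proposition 1.4.10]{R} (using $pb_2>n+1+\alpha$) yields $\int_{\mathbb B^n}\Psi_p(|g_k|/C)\,d\nu_\alpha\lesssim C^{-p}s_k\cdot s_k^{-1}\le1$ for $C$ large, whence $\|g_k\|^{lux}_{\alpha,\Psi_p}\le C$; the same computation applied to $\Phi$ (or, alternatively, Lemma \ref{bounded-convex}) gives $\|f_k\|^{lux}_{\alpha,\Phi_p}\lesssim|\lambda_k|$. Combining the two uniform bounds with the elementary inequality $\|\lambda\|_{\ell^1}\le\|\lambda\|_{\ell^p}$ valid for $0<p\le1$,
$$\sum_k\|f_k\|^{lux}_{\alpha,\Phi_p}\|g_k\|^{lux}_{\alpha,\Psi_p}\lesssim\sum_k|\lambda_k|\le\Big(\sum_k|\lambda_k|^p\Big)^{1/p}\le C\|f\|_{p,\alpha}.$$
The main obstacle is exactly this pair of atom estimates: one must choose $b_1,b_2$ large enough to guarantee Forelli--Rudin integrability while retaining the concentration bound, and one must control $\mathcal A_\alpha^{\Psi_p}$ without any reverse doubling on $\Psi$ --- which is precisely why the convexity-based direct computation, rather than an appeal to boundedness of the Bergman projection, is the right tool here.
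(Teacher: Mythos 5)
Your proof is correct and follows essentially the same route as the paper: the atomic decomposition of $\mathcal A_\alpha^p(\mathbb B^n)$ from \cite[Theorem 2.30]{KZ}, a splitting of each kernel atom into a $\Phi_p$-factor and a $\Psi_p$-factor, Luxemburg-norm estimates via Forelli--Rudin together with $\Phi^{-1}(s)\Psi^{-1}(s)\backsimeq s$, and finally $\ell^p\hookrightarrow\ell^1$. The only (cosmetic) differences are that you split the kernel exponent additively and normalize $g_k$ while carrying the whole coefficient on $f_k$, whereas the paper raises the atom to powers $p/q$ and $p/r$ with $1/p=1/q+1/r$ and distributes the coefficient, and that you estimate the Luxemburg functional directly by convexity where the paper invokes Lemma \ref{bounded-convex}.
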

\begin{proof}
First, let us recall with \cite[Theorem 2.30]{KZ} that, for $b>(n+1+\alpha)/p$, there exists a sequence $\{a_j\}$ in $\mathbb B^n$ such that every $f\in \mathcal {A}_\alpha^p(\mathbb B^n)$ admits the following representation
$$f(z)=\sum_jc_j\frac{\left(1-|a_j|^2\right)^{b-(n+1+\alpha)/p}}{\left(1-\langle z,a_j\rangle\right)^{b}},$$
where $\{c_j\}$ belongs to the sequence space $l^p$ and the series converges in the norm topology of $\mathcal {A}_\alpha^p(\mathbb B^n)$.
Now take, for non zero $c_j$,
$$f_j(z)=c_j^{p/q}\left(\frac{\left(1-|a_j|^2\right)^{b-(n+1+\alpha)/p}}{\left(1-\langle z,a_j\rangle\right)^{b}}\right)^{p/q}$$
and
$$g_j(z)=c_j^{p/r}\left(\frac{\left(1-|a_j|^2\right)^{b-(n+1+\alpha)/p}}{\left(1-\langle z,a_j\rangle\right)^{b}}\right)^{p/r}$$
where $\frac{1}{p}=\frac{1}{q}+\frac{1}{r}$. It is clear that (\ref{eq:weakfactsmallexpo}) holds.
Using Lemma \ref{bounded-convex}  and  Forelli-Rudin estimates (see for example \cite[Proposition 1.4.10]{R}) with $b$ large enough, we have
\begin{eqnarray}\label{eq:estimate1}
    \|f_j\|_{\alpha,\Phi_p}^{lux}\leq C |c_j|^{p/q}\frac{\left(1-|a_j|^2\right)^{-(n+1+\alpha)/q}}
{\left(\Phi^{-1}\left(\frac{1}{(1-|a_j|^2)^{n+1+\alpha}}\right)\right)^{1/p}}
\end{eqnarray}
and
\begin{eqnarray}\label{eq:estimate2}
    \|g_j\|_{\alpha,\Psi_p}^{lux}\leq C |c_j|^{p/q}\frac{\left(1-|a_j|^2\right)^{-(n+1+\alpha)/r}}
{\left(\Psi^{-1}\left(\frac{1}{(1-|a_j|^2)^{n+1+\alpha}}\right)\right)^{1/p}}.
\end{eqnarray}
Now, using (\ref{eq:estimate1}), (\ref{eq:estimate2}) and the fact that $\Phi^{-1}(t)\Psi^{-1}(t)\backsimeq t$, we have
\begin{multline*}   \left(\sum_{j}\|f_j\|^{lux}_{\alpha,\Phi_p}\|g_j\|^{lux}_{\alpha,\Psi_p}\right)^p  \le \\ C\left(\sum_{j}|c_j|^{p/q}|c_j|^{p/r}\frac{\left(1-|a_j|^2\right)^{-(n+1+\alpha)/p}}
{\left(\Phi^{-1}\left(\frac{1}{(1-|a_j|^2)^{n+1+\alpha}}\right)
\Psi^{-1}\left(\frac{1}{(1-|a_j|^2)^{n+1+\alpha}}\right)\right)^{1/p}}\right)^p \\
\leq C \left(\sum_j|c_j|\right)^p\leq C\sum_j|c_j|^{p}\leq C \|f\|_{p,\alpha}^p.
\end{multline*}
This finishes the proof.
\end{proof}

Using the weak factorization with $p=1$, we can know show that, as in the classical case, for convex growth function $\Phi$, the small Hankel operator on $\mathcal A_{\alpha}^{\Phi}(\mathbb B^n)$ is bounded if and only if the symbol lies in the Bloch space.
\begin{theorem}
Let $\Phi\in \mathscr U^q$  such that $\Phi$ satisfies the Dini condition (\ref{dinicondition}), and $\alpha>-1$. Then the Hankel operator $h_b$ extends into a bounded operator on $\mathcal A_{\alpha}^{\Phi}(\mathbb B^n)$ if and only if $b\in \mathcal {B}$.
\end{theorem}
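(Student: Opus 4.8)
The plan is to transfer the problem, by duality, to the Bergman space $\mathcal A_\alpha^1(\mathbb B^n)$, whose dual is the Bloch space. Since $\Phi$ satisfies the Dini condition, it satisfies the $\bigtriangledown_2$-condition, so $P_\alpha$ is bounded on $L_\alpha^\Phi(\mathbb B^n)$ and the dual of $\mathcal A_\alpha^\Phi(\mathbb B^n)$ identifies, through the integral pairing $\langle\cdot,\cdot\rangle_\alpha$, with $\mathcal A_\alpha^\Psi(\mathbb B^n)$, where $\Psi$ is the complementary function of $\Phi$. Hence $h_b$ is bounded on $\mathcal A_\alpha^\Phi(\mathbb B^n)$ if and only if
\[
\sup\Bigl\{|\langle h_bf,g\rangle_\alpha|:\ \|f\|_{\alpha,\Phi}^{lux}\le 1,\ \|g\|_{\alpha,\Psi}^{lux}\le 1\Bigr\}<\infty .
\]
For $f$ a bounded holomorphic function and $g$ a holomorphic polynomial, the self-adjointness of $P_\alpha$ together with $P_\alpha g=g$ gives the basic identity
\[
\langle h_bf,g\rangle_\alpha=\int_{\mathbb B^n}P_\alpha(b\overline f)(z)\overline{g(z)}\,d\nu_\alpha(z)=\int_{\mathbb B^n}b(z)\overline{f(z)g(z)}\,d\nu_\alpha(z)=\langle b,fg\rangle_\alpha ,
\]
which is the bridge to $\mathcal A_\alpha^1(\mathbb B^n)$: recall that, by \cite{KZ} (the case $p=1$, $\beta=0$, so that $\Gamma_0=\mathcal B$), $\mathcal B$ is the dual of $\mathcal A_\alpha^1(\mathbb B^n)$ under $\langle\cdot,\cdot\rangle_\alpha$.

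For the sufficiency, assume $b\in\mathcal B$. Since $\Phi^{-1}(t)\Psi^{-1}(t)\backsimeq t$, the H\"older inequality for the complementary pair $(\Phi,\Psi)$ shows that $fg\in\mathcal A_\alpha^1(\mathbb B^n)$ with $\|fg\|_{1,\alpha}\lesssim\|f\|_{\alpha,\Phi}^{lux}\|g\|_{\alpha,\Psi}^{lux}$. Combining this with the identity above and the Bloch duality,
\[
|\langle h_bf,g\rangle_\alpha|=|\langle b,fg\rangle_\alpha|\le\|b\|_{\mathcal B}\,\|fg\|_{1,\alpha}\lesssim\|b\|_{\mathcal B}\,\|f\|_{\alpha,\Phi}^{lux}\|g\|_{\alpha,\Psi}^{lux},
\]
so the supremum above is finite and $h_b$ is bounded. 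For the necessity, assume $h_b$ is bounded with norm $M$, and let $h\in\mathcal A_\alpha^1(\mathbb B^n)$. Proposition \ref{prop:weakfactsmallexpo} with $p=1$ (so $\Phi_1=\Phi$, $\Psi_1=\Psi$) produces a weak factorization $h=\sum_j f_jg_j$ with $f_j\in\mathcal A_\alpha^\Phi(\mathbb B^n)$, $g_j\in\mathcal A_\alpha^\Psi(\mathbb B^n)$ and $\sum_j\|f_j\|_{\alpha,\Phi}^{lux}\|g_j\|_{\alpha,\Psi}^{lux}\le C\|h\|_{1,\alpha}$. Using the identity $\langle b,f_jg_j\rangle_\alpha=\langle h_bf_j,g_j\rangle_\alpha$ and the duality bound $|\langle h_bf_j,g_j\rangle_\alpha|\lesssim\|h_bf_j\|_{\alpha,\Phi}^{lux}\|g_j\|_{\alpha,\Psi}^{lux}\le M\|f_j\|_{\alpha,\Phi}^{lux}\|g_j\|_{\alpha,\Psi}^{lux}$, summation over $j$ yields $|\langle b,h\rangle_\alpha|\lesssim M\|h\|_{1,\alpha}$. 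Thus $b$ defines a bounded functional on $\mathcal A_\alpha^1(\mathbb B^n)$, i.e. $b\in\left(\mathcal A_\alpha^1(\mathbb B^n)\right)^*=\mathcal B$.

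The main obstacle is not the algebra but the justification of the pairing identities when $b$ is not square-integrable (sufficiency) or when the factorization is an infinite series (necessity): a priori neither $\langle b,\cdot\rangle_\alpha$ nor the interchange of $\sum_j$ with the pairing is legitimate on all of $\mathcal A_\alpha^1(\mathbb B^n)$. I would handle this through the limit pairing (\ref{eq:dualpairing}) and a radial approximation. All identities are first established on holomorphic polynomials and on the explicit bounded atoms $f_j,g_j$ furnished by Proposition \ref{prop:weakfactsmallexpo}, where every integral converges absolutely; the $\bigtriangledown_2$-condition (equivalently, the $\Delta_2$-condition for both $\Phi$ and $\Psi$) then makes polynomials dense in $\mathcal A_\alpha^\Phi$, $\mathcal A_\alpha^\Psi$ and $\mathcal A_\alpha^1$, which allows the passage to general $f,g,h$. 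To commute the sum with the pairing in the necessity step, I would dilate, writing $h(rz)=\sum_j f_j(rz)g_j(rz)$ with uniform-on-compacta convergence of the atomic series, use that dilation contracts the Orlicz norms ($\|f_r\|_{\alpha,\Phi}^{lux}\le\|f\|_{\alpha,\Phi}^{lux}$) to dominate the series uniformly in $r$ by $M\sum_j\|f_j\|_{\alpha,\Phi}^{lux}\|g_j\|_{\alpha,\Psi}^{lux}$, and finally let $r\to1$.
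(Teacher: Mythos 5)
Your argument is correct and follows essentially the same route as the paper: sufficiency via the duality $\left(\mathcal A_\alpha^{\Phi}\right)^*=\mathcal A_\alpha^{\Psi}$ and the product estimate $\|fg\|_{1,\alpha}\lesssim\|f\|^{lux}_{\alpha,\Phi}\|g\|^{lux}_{\alpha,\Psi}$ (Proposition \ref{prop:volbergtolokonnikov}) together with Bloch duality for $\mathcal A_\alpha^1$, and necessity via the weak factorization of Proposition \ref{prop:weakfactsmallexpo} with $p=1$. Your closing paragraph on justifying the pairing identities and the interchange of the sum with the pairing is a welcome extra layer of care that the paper leaves implicit.
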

\begin{proof}
First we recall that since $\Phi$ satisfies (\ref{dinicondition}), then the dual of $\mathcal A_{\alpha}^{\Phi}(\mathbb B^n)$ coincides with $\mathcal A_{\alpha}^{\Psi}(\mathbb B^n)$ with $\Psi$ the complementary function of $\Phi$. Since $\Phi^{-1}(t)\Psi^{-1}(t)\simeq t$,
by Proposition \ref{prop:volbergtolokonnikov}, for any $f\in \mathcal A_{\alpha}^{\Phi}(\mathbb B^n)$ and $g\in \mathcal A_{\alpha}^{\Psi}(\mathbb B^n)$, the product $fg$ is in $\mathcal A_{\alpha}^{1}(\mathbb B^n)$ which has dual space the Bloch space $\mathcal B$. Consequently, there is a constant $C>0$ such that, for $f\in \mathcal A_{\alpha}^{\Phi}(\mathbb B^n)$ and $g\in \mathcal A_{\alpha}^{\Psi}(\mathbb B^n)$
$$|\langle  h_b(f),g\rangle_\alpha|=|\langle b,f g
\rangle_\alpha|\le C||b||_{\mathcal B}||fg||_{1,\alpha}\le C||b||_{\mathcal B}||f||^{lux}_{\alpha,\Phi}||g||^{lux}_{\alpha,\Psi}.$$ We conclude that if $b\in
\mathcal B$, then $h_b$ is bounded
from $\mathcal A_\alpha^{\Phi}(\mathbb B^n)$ into itself with $||h_b||\leq C||b||_{\mathcal B}.$

Conversely, we suppose that $h_b$ extends into a bounded
operator on $\mathcal A_{\alpha}^{\Phi}(\mathbb B^n)$. To prove that the symbol $b\in\mathcal B$, we only need to prove that there is a constant $C>0$ such that for any $f\in \mathcal A_{\alpha}^{1}(\mathbb B^n)$,
\begin{equation}\label{eq:necesssmall}
|\langle b,f\rangle_\alpha |\le C\|f\|_{1,\alpha}.
\end{equation}
From Proposition \ref{prop:weakfactsmallexpo} we have that any $f\in \mathcal A_{\alpha}^1(\mathbb B^n)$ can be written as $f=\sum_{j}f_jg_j$
with $\sum_{j}\|f_j\|^{lux}_{\alpha,\Phi}\|g_j\|^{lux}_{\alpha,\Psi}\le C\|f\|_{1,\alpha}$. It follows that
\Beas
|\langle b,f\rangle_\alpha| &\le& \sum_j |\langle b,f_jg_j\rangle_\alpha|
 = \sum_j |\langle h_b(f_j),g_j\rangle_\alpha|\\
&\le& \sum_{j}\|h_b(f_j)\|^{lux}_{\alpha,\Phi}\|g_j\|^{lux}_{\alpha,\Psi}\\ &\le& \|h_b\|\sum_{j}\|f_j\|^{lux}_{\alpha,\Phi}\|g_j\|^{lux}_{\alpha,\Psi}\\ &\le& C\|h_b\|\|f\|_{1,\alpha}.
\Eeas
Thus, we have (\ref{eq:necesssmall})  and this  complete the proof.
\end{proof}

\subsection{Boundedness of $h_b$: $\mathcal A_\alpha^{\Phi_1}(\mathbb
 B^n)\rightarrow \mathcal A_\alpha^{\Phi_2}(\mathbb
 B^n)$;  $(\Phi_1,\Phi_2)\in \mathscr{U}^q\times \mathscr{L}_p $}

 We start this section by defining the space $\mathcal {A}_{weak}^1(\mathbb B^n)$.
 \begin{definition} The space $\mathcal {A}_{weak}^1(\mathbb B^n)$ consists of all holomorphic functions $f$ such that, for some constant $C>0$, we have
$$\lambda \nu_\alpha \left(\{z\in \mathbb B^n:|f(z)|>\lambda\}\right)\le C\,\,\,\textrm{for any}\,\,\, \lambda>0.$$
\end{definition}
This becomes a Banach space under the following norm
$$||f||_{1,weak}=\sup_{\lambda>0}\lambda \nu_\alpha \left(\{z\in \mathbb B^n:|f(z)|>\lambda\}\right).$$
We observe that from the above definition and Proposition \ref{prop:weakboundedness} we have that the Bergman projection $P_\alpha$ is
bounded from $L_\alpha^1(\mathbb B^n)$ to $\mathcal {A}_{weak}^1(\mathbb B^n)$.

We have the following embedding result.
\begin{proposition}\label{h1faibleorlicz}
Let $\Phi\in \mathscr{L}_p$, $\alpha >-1$. Suppose that $\Phi$ satisfies the Dini's condition
\begin{equation}\label{Dinicondition1}
\int_1^\infty\frac{\Phi(t)}{t^2}dt<\infty.
\end{equation}
Then $\mathcal {A}_{weak}^1(\mathbb B^n)$ embeds continuously in $\mathcal A_\alpha^{\Phi}(\mathbb
 B^n)$
\end{proposition}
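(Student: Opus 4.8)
The plan is to prove the continuous embedding by establishing the quantitative estimate $\|f\|^{lux}_{\alpha,\Phi}\le C\|f\|_{1,weak}$ with $C$ independent of $f$; this is exactly what continuity of the (linear, injective) inclusion map means. Set $A:=\|f\|_{1,weak}$. The idea is to bound the Orlicz integral of a dilate $f/\mu$ and then choose $\mu$ proportional to $A$ so that this integral is at most $1$. Since $\Phi$ is a $\mathcal C^1$ growth function with $\Phi(0)=0$, the layer-cake (distribution function) identity gives, for every $\mu>0$,
\[\int_{\mathbb B^n}\Phi\!\left(\frac{|f(z)|}{\mu}\right)d\nu_\alpha(z)=\int_0^\infty \Phi'(s)\,\nu_\alpha\!\left(\{z\in\mathbb B^n:\ |f(z)|>\mu s\}\right)ds.\]
Two bounds on the distribution function are available: $\nu_\alpha(\{|f|>\mu s\})\le 1$ since $\nu_\alpha$ is a probability measure, and $\nu_\alpha(\{|f|>\mu s\})\le A/(\mu s)$ by definition of the weak norm. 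Writing $s_0:=A/\mu$ and using the first bound on $(0,s_0)$ and the second on $(s_0,\infty)$ yields
\[\int_{\mathbb B^n}\Phi\!\left(\frac{|f|}{\mu}\right)d\nu_\alpha\le \Phi(s_0)+\frac{A}{\mu}\int_{s_0}^\infty\frac{\Phi'(s)}{s}\,ds.\]

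Next I would record an elementary consequence of the hypotheses: $\Phi(s)/s\to 0$ as $s\to\infty$. Indeed $\Phi(s)/s$ is non-increasing because $\Phi\in\mathscr{L}_p$, so a strictly positive limit would force $\int_1^\infty \Phi(s)/s^2\,ds=\infty$, contradicting the Dini condition (\ref{Dinicondition1}). With this, an integration by parts in the tail ($u=1/s$, $dv=\Phi'(s)\,ds$, using $\Phi'\simeq\Phi/s$ to guarantee convergence and the vanishing of the boundary term at $\infty$) gives $\int_{s_0}^\infty \Phi'(s)/s\,ds=-\Phi(s_0)/s_0+\int_{s_0}^\infty \Phi(s)/s^2\,ds$. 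Since $A/\mu=s_0$, the term $\Phi(s_0)$ cancels exactly against $\tfrac{A}{\mu}\cdot\tfrac{\Phi(s_0)}{s_0}$, collapsing the estimate to the clean bound
\[\int_{\mathbb B^n}\Phi\!\left(\frac{|f|}{\mu}\right)d\nu_\alpha\le s_0\int_{s_0}^\infty\frac{\Phi(s)}{s^2}\,ds,\qquad s_0=\frac{A}{\mu}.\]

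It then remains to show that the right-hand side, call it $g(s_0)$, tends to $0$ as $s_0\to0^+$. To see this I would split $\int_{s_0}^\infty=\int_{s_0}^1+\int_1^\infty$. The tail $\int_1^\infty \Phi(s)/s^2\,ds$ is a finite constant $D$ by the Dini condition, so its contribution $s_0D\to0$; for the part on $(s_0,1)$ I would invoke the lower type of $\Phi\in\mathscr{L}_p$, namely $\Phi(s)\le C\Phi(1)s^{p}$ for $s\le1$, giving $s_0\int_{s_0}^1 s^{p-2}\,ds\lesssim s_0^{p}$ when $p<1$ (and $s_0\log(1/s_0)$ when $p=1$), both of which tend to $0$. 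Hence one may fix $s_0^\ast>0$ with $g(s_0^\ast)\le1$; taking $\mu=A/s_0^\ast=:CA$ forces $\int_{\mathbb B^n}\Phi(|f|/\mu)\,d\nu_\alpha\le1$, i.e. $\|f\|^{lux}_{\alpha,\Phi}\le CA=C\|f\|_{1,weak}$ with $C=1/s_0^\ast$ independent of $f$. (The case $A=0$ is trivial since then $f\equiv0$.)

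The genuinely delicate point, and the place where both hypotheses enter essentially, is the reduction to $g(s_0)$ and its vanishing. The Dini condition $\int_1^\infty\Phi(t)/t^2\,dt<\infty$ is precisely what tames the contribution of the large values of $f$, where $\mathcal A^1_{weak}$-functions are least controlled, while the lower-type behavior of $\Phi\in\mathscr{L}_p$ near the origin disposes of the small-$s$ part; the integration by parts is what exposes the exact cancellation, so that no spurious additive constant survives to obstruct the final choice of $\mu$. Everything else, namely the layer-cake identity, the two distribution-function bounds, and the selection of $\mu$, is routine once this core estimate is in place.
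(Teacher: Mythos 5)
Your proof is correct, and its skeleton is the same as the paper's: a layer-cake representation of the Orlicz modular, the trivial bound $\nu_\alpha(\{|f|>\lambda\})\le 1$ on the small-$\lambda$ piece, the weak-type bound $\nu_\alpha(\{|f|>\lambda\})\le \|f\|_{1,weak}/\lambda$ on the tail, and the Dini condition (\ref{Dinicondition1}) together with $\Phi'(t)\backsimeq\Phi(t)/t$ to make the tail integral converge. The difference is in the execution. The paper splits at the fixed height $\lambda=1$ and only establishes the modular bound $\int_{\mathbb B^n}\Phi(|f|)\,d\nu_\alpha\le\Phi(1)+C\|f\|_{1,weak}$, leaving implicit the (routine but nontrivial) passage from this to continuity of the inclusion in the Luxemburg quasi-norm. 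You instead work with the dilate $f/\mu$, split at the adaptive height $s_0=\|f\|_{1,weak}/\mu$, and use an integration by parts to cancel the boundary term exactly, arriving at the clean bound $s_0\int_{s_0}^\infty\Phi(s)s^{-2}\,ds$; the additional ingredients you need for this --- the vanishing of $\Phi(s)/s$ at infinity and the lower-type estimate $\Phi(s)\lesssim s^p\Phi(1)$ on $(s_0,1)$ --- are both available from the hypotheses, and they let you conclude the genuine norm inequality $\|f\|^{lux}_{\alpha,\Phi}\le C\|f\|_{1,weak}$ directly. So your argument is a quantitative sharpening of the paper's: same mechanism, but it delivers the continuity statement in the form one actually wants, at the modest cost of the extra small-$s$ estimate that the paper avoids by keeping the additive constant $\Phi(1)$.
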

\begin{proof}
It is enough to prove that for any $f\in \mathcal {A}_{weak}^1(\mathbb B^n)$, there exists $C>0$ such that
  $$\int_{\mathbb B^n}\Phi(|f|)(z)d\nu_\alpha (z)\le C.$$
We have
\begin{eqnarray*}
\int_{\mathbb B^n}\Phi(|f|)(z)d\nu_\alpha (z) &=& \int_0^\infty \nu_\alpha \left(\{z\in \mathbb B^n:|f(z)|>\lambda\}\right)\Phi^\prime(\lambda)d\lambda\\
&=& I+J,
\end{eqnarray*}
where $$I=\int_0^1 \nu_\alpha \left(\{z\in \mathbb B^n:|f(z)|>\lambda\}\right)\Phi^\prime(\lambda)d\lambda$$
and $$J=\int_1^\infty \nu_\alpha \left(\{z\in \mathbb B^n:|f(z)|>\lambda\}\right)\Phi^\prime(\lambda)d\lambda.$$
It is clear that
$$I\le \nu_\alpha (\mathbb B^n)\int_0^1\Phi^\prime(\lambda)d\lambda=\Phi(1).$$
To estimate the integral $J$, we use the definition of $\mathcal {A}_{weak}^1(\mathbb B^n)$, the fact that $\Phi'(t)\backsimeq \frac{\Phi(t)}{t}$ and
$\Phi$ satisfies the Dini's condition (\ref{Dinicondition1}). We obtain
\begin{eqnarray*}
J &=& \int_1^\infty \nu_\alpha \left(\{z\in \mathbb B^n:|f(z)|>\lambda\}\right)\Phi^\prime(\lambda)d\lambda\\
 &\le& ||f||_{1,weak}\int_1^\infty \frac{\Phi'(\lambda)}{\lambda}d\lambda \backsimeq C ||f||_{1,weak} \int_1^\infty \frac{\Phi(\lambda)}{\lambda^2}d\lambda\le C ||f||_{1,weak}.
\end{eqnarray*}
The proof is complete.
\end{proof}

Under the Dini's condition (\ref{Dinicondition1}), we easly obtain the boundedness criteria for the small Hankel operator $h_b$ from  $\mathcal A_\alpha^{\Phi_1}(\mathbb
 B^n)$ into $\mathcal A_\alpha^{\Phi_2}(\mathbb
 B^n)$ when  $(\Phi_1,\Phi_2)\in \mathscr{U}^q\times \mathscr{L}_p $. This is a  generalization of the case $h_b:\mathcal {A}_\alpha^p(\mathbb B^n)\rightarrow \mathcal A_\alpha^q(\mathbb B^n)$ with $1< p<\infty$ and $0<q<1$.
\begin{theorem}\label{theo:casavecpertesimple}
Let $\Phi_1\in \mathscr{U}^q$ and $\Phi_2\in \mathscr{L}_p$, $\alpha>-1$. Let $\Psi_1$ be the complementary function of $\Phi_1$ and, suppose that $\Phi_1$ satisfies the Dini's condition (\ref{dinicondition}) while $\Phi_2$
satisfies (\ref{Dinicondition1}). Then $h_b$ extends as a bounded operator from $\mathcal A_\alpha^{\Phi_1}(\mathbb
 B^n)$ to $\mathcal A_\alpha^{\Phi_2}(\mathbb B^n)$ if and only if $b\in \mathcal A_\alpha^{\Psi_1}(\mathbb
 B^n)$.
\end{theorem}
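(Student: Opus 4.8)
The plan is to characterize boundedness of $h_b:\mathcal A_\alpha^{\Phi_1}(\mathbb B^n)\to\mathcal A_\alpha^{\Phi_2}(\mathbb B^n)$ via duality, exactly as in the previous subsection, but exploiting the special structure coming from the fact that $\Phi_2$ is concave (so that $\mathcal A_\alpha^{\Phi_2}$ is sandwiched between $\mathcal A_\alpha^1$ and $\mathcal A_\alpha^p$ as in \eqref{eq:doubleinclus}) together with the embedding $\mathcal A_{weak}^1(\mathbb B^n)\hookrightarrow \mathcal A_\alpha^{\Phi_2}(\mathbb B^n)$ furnished by Proposition \ref{h1faibleorlicz}. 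For the sufficiency direction, I would start from $f\in\mathcal A_\alpha^{\Phi_1}(\mathbb B^n)$ and $b\in\mathcal A_\alpha^{\Psi_1}(\mathbb B^n)$. The natural move is to factor the pairing $\langle h_b(f),g\rangle_\alpha=\langle b,fg\rangle_\alpha$ and estimate $\|fg\|$ in an appropriate space. Here I would invoke Proposition \ref{prop:volbergtolokonnikov} (Volberg--Tolokonnikov type product estimate) with the inverse-function relation: since $\Phi_1^{-1}(t)\Psi_1^{-1}(t)\backsimeq t$, the product of an $\mathcal A_\alpha^{\Phi_1}$ function with an $\mathcal A_\alpha^{\Psi_1}$ function lands in $\mathcal A_\alpha^1(\mathbb B^n)$, and $P_\alpha$ maps $L_\alpha^1$ into $\mathcal A_{weak}^1$ by Proposition \ref{prop:weakboundedness}.

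The cleaner route for sufficiency, however, is to show directly that $h_b(f)\in\mathcal A_\alpha^{\Phi_2}(\mathbb B^n)$ with control of its norm. Since $b\in\mathcal A_\alpha^{\Psi_1}$ and $f\in\mathcal A_\alpha^{\Phi_1}$, the product $b\overline f\in L_\alpha^1(\mathbb B^n)$ with $\|b\overline f\|_{1,\alpha}\lesssim \|b\|^{lux}_{\alpha,\Psi_1}\|f\|^{lux}_{\alpha,\Phi_1}$; then $h_b(f)=P_\alpha(b\overline f)\in\mathcal A_{weak}^1(\mathbb B^n)$ with $\|h_b(f)\|_{1,weak}\lesssim \|b\overline f\|_{1,\alpha}$ by the weak-type bound. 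Because $\Phi_2$ satisfies the Dini condition \eqref{Dinicondition1}, Proposition \ref{h1faibleorlicz} gives $\mathcal A_{weak}^1\hookrightarrow\mathcal A_\alpha^{\Phi_2}$, whence $\|h_b(f)\|^{lux}_{\alpha,\Phi_2}\lesssim\|b\|^{lux}_{\alpha,\Psi_1}\|f\|^{lux}_{\alpha,\Phi_1}$. This chains together the three cited results into boundedness of $h_b$.

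For the necessity direction, assuming $h_b$ bounded, I would test against the reproducing-kernel-type functions to recover a pointwise estimate on $b$, or equivalently use a weak factorization of the predual. The idea is that $\mathcal A_\alpha^{\Psi_1}(\mathbb B^n)$ is (up to the Dini hypothesis on $\Phi_1$, which guarantees the $\bigtriangledown_2$-condition and hence the duality $(\mathcal A_\alpha^{\Phi_1})^*\simeq\mathcal A_\alpha^{\Psi_1}$) the natural target space for the symbol. Concretely, I would show that the boundedness inequality
\begin{equation*}
|\langle b,fg\rangle_\alpha|=|\langle h_b(f),g\rangle_\alpha|\le \|h_b\|\,\|f\|^{lux}_{\alpha,\Phi_1}\|g\|^{lux}_{\alpha,\Phi_2'}
\end{equation*}
for suitable test functions $f,g$ forces $b$ into $\mathcal A_\alpha^{\Psi_1}$; one picks $f,g$ as normalized powers of Bergman kernels centered at a point $a$ so that their product reproduces $M_k^\alpha b(a)$ (or $b(a)$ itself) up to the expected weight, and then reads off the $\Psi_1$-norm bound from the resulting supremum over $a$.

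The main obstacle I anticipate is the necessity direction, specifically choosing test functions whose product $fg$ concentrates correctly while keeping both $\|f\|^{lux}_{\alpha,\Phi_1}$ and the dual-side norm of $g$ under control; this is where Lemma \ref{bounded-convex} together with Forelli--Rudin estimates will be indispensable to guarantee uniform-in-$a$ bounds. The sufficiency, by contrast, should be essentially a bookkeeping assembly of Propositions \ref{prop:weakboundedness}, \ref{h1faibleorlicz}, and \ref{prop:volbergtolokonnikov}, which is precisely why the author calls it ``easily obtained''; the role of the two distinct Dini conditions (one on $\Phi_1$ via \eqref{dinicondition}, one on $\Phi_2$ via \eqref{Dinicondition1}) is exactly to make the duality identification and the weak-$L^1$ embedding available simultaneously.
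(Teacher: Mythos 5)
Your sufficiency argument is exactly the paper's: the Orlicz--H\"older inequality gives $b\overline f\in L_\alpha^1(\mathbb B^n)$ with $\|b\overline f\|_{1,\alpha}\lesssim \|b\|^{lux}_{\alpha,\Psi_1}\|f\|^{lux}_{\alpha,\Phi_1}$, Proposition \ref{prop:weakboundedness} puts $h_b(f)=P_\alpha(b\overline f)$ in $\mathcal A_{weak}^1(\mathbb B^n)$, and Proposition \ref{h1faibleorlicz} (using the Dini condition (\ref{Dinicondition1}) on $\Phi_2$) embeds $\mathcal A_{weak}^1$ into $\mathcal A_\alpha^{\Phi_2}(\mathbb B^n)$. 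No issues there.

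The gap is in your necessity plan. You propose to test $\langle h_b(f),g\rangle_\alpha$ against normalized Bergman kernels centered at $a$ so as to reproduce $M_k^\alpha b(a)$ and then ``read off the $\Psi_1$-norm bound from the resulting supremum over $a$.'' That mechanism produces a \emph{pointwise} growth estimate on $b$, i.e.\ membership in a Bloch/Lipschitz-type space $\Gamma_{\alpha,\rho}$; it cannot by itself yield the \emph{integrability} condition $b\in\mathcal A_\alpha^{\Psi_1}(\mathbb B^n)$, which is what the theorem asserts. (This test-function strategy is the right one for the other subsections of the paper, where the symbol class genuinely is a $\Gamma_{\alpha,\rho}$ space, but not here.) The paper's necessity argument is much simpler and bypasses test functions entirely: since $h_bf(0)=\int_{\mathbb B^n}b\,\overline f\,d\nu_\alpha$ and point evaluation at the origin is bounded on $\mathcal A_\alpha^{\Phi_2}(\mathbb B^n)\subset\mathcal A_\alpha^{p}(\mathbb B^n)$, boundedness of $h_b$ gives $|\langle b,f\rangle_\alpha|\le C\|h_b\|\,\|f\|^{lux}_{\alpha,\Phi_1}$ for all $f$, so $b$ induces a bounded linear functional on $\mathcal A_\alpha^{\Phi_1}(\mathbb B^n)$; the Dini condition (\ref{dinicondition}) on $\Phi_1$ guarantees the $\bigtriangledown_2$-condition and hence the identification $(\mathcal A_\alpha^{\Phi_1})^*\simeq\mathcal A_\alpha^{\Psi_1}$ under the integral pairing, which forces $b\in\mathcal A_\alpha^{\Psi_1}(\mathbb B^n)$. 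You should replace your kernel-testing step by this evaluation-at-$0$ plus duality argument; as written, your necessity direction would not close.
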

\begin{proof}
We start by proving the necessity. Suppose that $h_b$ is bounded from $\mathcal A_\alpha^{\Phi_1}(\mathbb
 B^n)$ to $\mathcal A_\alpha^{\Phi_2}(\mathbb B^n)$. Then for any $f\in \mathcal A_\alpha^{\Phi_1}(\mathbb
 B^n)$, we have $$\left|\int_{\mathbb B^n}b(\xi)\overline {f(\xi)}d\nu_\alpha(\xi)\right|=|h_bf(0)|\le C||h_b(f)||^{lux}_{\alpha,\Phi_2}\le C||h_b||||f||^{lux}_{\alpha,\Phi_1}.$$
  We have used the fact that $\mathcal A_\alpha^{\Phi_2}(\mathbb B^n)$ is continuously contained in $\mathcal A_\alpha^p(\mathbb B^n)$, and the evaluation at $0$ is bounded on this space. It follows that $b$ belongs to the dual space of $\mathcal A_\alpha^{\Phi_1}(\mathbb B^n)$
 that is $b\in \mathcal A_\alpha^{\Psi_1}(\mathbb B^n)$.

 Conversely, if $b\in \mathcal A_\alpha^{\Psi_1}(\mathbb B^n)$, then for any $f\in \mathcal A_\alpha^{\Phi_1}(\mathbb
 B^n)$, the product $b\overline {f}$ is in $L_\alpha^1(\mathbb B^n)$ by Proposition \ref{prop:volbergtolokonnikov}. Thus, $h_b(f):=P_\alpha (b\overline {f})$ is in $\mathcal {A}_{weak}^1(\mathbb B^n)$ and consequently in
 $\mathcal A_\alpha^{\Phi_2}(\mathbb B^n)$ by Proposition \ref{h1faibleorlicz}. The proof is complete.
\end{proof}

\subsection{Boundedness of $h_b$: $\mathcal A_\alpha^{\Phi_1}(\mathbb
 B^n)\rightarrow \mathcal A_\alpha^{\Phi_2}(\mathbb
 B^n)$;  $(\Phi_1,\Phi_2)\in \mathscr{L}_p\times \mathscr{L}_p $}

Let us start this section by the following result.
\btheo\label{theo:h_bfacto}
Let $\Phi_1\in \mathscr{L}_p$, $\alpha>-1$ and $\Phi_2\in \mathscr{L}_p\cup \mathscr{U}^q$. If $h_b$ extends into a bounded operator from $\mathcal A_\alpha^{\Phi_1}(\mathbb B^n)$ into $\mathcal A_\alpha^{\Phi_2}(\mathbb B^n)$, then the symbol $b$ belongs to $\Gamma_{\alpha,\rho_1}(\mathbb B^n)$ with
$\rho_1(t)=\frac{1}{t\Phi_1(\frac{1}{t})}$. Conversely, if $b\in \Gamma_{\alpha,\rho_1}(\mathbb B^n)$, then there exists a bounded operator from $\mathcal A_\alpha^{\Phi_1}(\mathbb B^n)$ into $L_\alpha^1(\mathbb B^n)$ which we note $T_b$ such that $h_b=P_\alpha T_b$.
\etheo
\begin{proof}
That the boundedness of $h_b$ from $\mathcal A_\alpha^{\Phi_1}(\mathbb B^n)$ into $\mathcal A_\alpha^{\Phi_2}(\mathbb B^n)$ implies that $b$ belongs to
$\Gamma_{\alpha,\rho_1}(\mathbb B^n)=\left(\mathcal A_\alpha^{\Phi_1}(\mathbb B^n)\right)^*$ follows as in the first part of the proof of Theorem \ref{theo:casavecpertesimple}.
\vskip .2cm
It is easy to check that $h_b=P_\alpha T_b$ (see for example \cite{BL}) with $$T_bf(z)=M_k^\alpha b(z)(1-|z|^2)^k\overline {f(z)},$$
where $k>(n+1+\alpha)\left(\frac 1p -1\right)$.
Recalling that $b\in \Gamma_{\alpha,\rho_1}(\mathbb B^n)$ is equivalent in saying that for some constant $C>0$,
$$|M_k^\alpha b(z)|(1-|z|^2)^k\le C\rho_1\left((1-|z|^2)^{n+1+\alpha}\right),\,\,\,\textrm{for all}\,\,\, z\in \mathbb B^n,$$
and using Lemma \ref{lem:embedconcave}, we easily get
\Beas
\int_{\mathbb B^n}|T_bf(z)|d\nu_\alpha(z) &\le& C\|b\|_{\Gamma_{\alpha,\rho_1}}\int_{\mathbb B^n}|f(z)|\rho_1\left((1-|z|)^{n+1+\alpha}\right)d\nu_\alpha(z)\\ &\le&
C\|b\|_{\Gamma_{\alpha,\rho_1}}\|f\|_{\alpha,\Phi_1}.
\Eeas
\end{proof}

As a corollary, we obtain
\bcor
Let $\Phi_1, \Phi_2\in \mathscr{L}_p$, $\alpha>-1$. Suppose moreover that $\Phi_2$ satisfies the Dini condition (\ref{Dinicondition1}). Then $h_b$ extends into a bounded operator from $\mathcal A_\alpha^{\Phi_1}(\mathbb B^n)$ into $\mathcal A_\alpha^{\Phi_2}(\mathbb B^n)$ if and only if the symbol $b$ belongs to $\Gamma_{\alpha,\rho_1}(\mathbb B^n)$ with
$\rho_1(t)=\frac{1}{t\Phi_1(\frac{1}{t})}$.
\ecor
\begin{proof}
First we observe that the necessity is given by Theorem \ref{theo:h_bfacto}. From the same theorem, we have that for any $f\in \mathcal A_\alpha^{\Phi_1}(\mathbb B^n)$, $T_b(f)\in L_\alpha^1(\mathbb B^n)$ with $T_bf(z)=M_k^\alpha b(z)(1-|z|^2)^k\overline {f(z)}$. As $h_b=P_\alpha T_b$,
we obtain that $h_b(f)\in \mathcal A_{weak}^1(\mathbb B^n)$. The conclusion follows now from that as $\Phi_2$ satisfies (\ref{Dinicondition1}), $\mathcal A_{weak}^1(\mathbb B^n)$ embeds continuously into $\mathcal A_\alpha^{\Phi_2}(\mathbb B^n)$.
\end{proof}

\subsection{Boundedness of $h_b$: $\mathcal A_\alpha^{\Phi}(\mathbb
 B^n)\rightarrow \mathcal A_\alpha^{1}(\mathbb
 B^n)$;  $\Phi\in \mathscr{L}_p $}
We start this section by observing that, for $�\in\mathbb B^n$, the following function $$f_a(z)=\Phi^{-1}\left(\left(\frac{1}{(1-|a|^2)^{n+1+\alpha}}\right)\right)\frac{(1-|a|^2)^{n+1+\alpha+k}}{(1-\langle z,a\rangle)^{n+1+\alpha+k}}\log\frac{1-\langle z,a\rangle}{1-|a|^2},\,\,\,z\in \mathbb B^n $$ is uniformly in $\mathcal A_\alpha^{\Phi}(\mathbb
 B^n)$ for $k$ an integer such that $k>(n+1+\alpha)(\frac{1}{p}-1)$.

 To see this recall that for any $\varepsilon>0$, there is a constant $C>0$ such that $\log^+x\le Cx^\varepsilon$. It follows that
 $$|f_a(z)|\le C\Phi^{-1}\left(\frac{1}{(1-|a|^2)^{n+1+\alpha}}\right)\frac{(1-|a|^2)^{n+1+\alpha+k-\varepsilon}}{|1-\langle z,a\rangle|^{n+1+\alpha+k-\varepsilon}},$$
for $\varepsilon$ small enough. We have also used the fact that, for $a,z\in\mathbb B^n,$
$$
\left|\frac{1-\langle z,a\rangle}{1-|a|^2}\right|\geq \frac 12.
$$
Choosing $0<\varepsilon<k-(n+1+\alpha)(\frac{1}{p}-1)$, we write
\begin{multline*}
\int_{\mathbb B^n}\Phi(|f_a(z)|)d\nu_\alpha(z)=\\
\int_{\frac{(1-|a|^2)}{|1-\langle z,a\rangle|}\le 1}\Phi(|f_a(z)|)d\nu_\alpha(z)+\int_{\frac{(1-|a|^2)}{|1-\langle z,a\rangle|}> 1}\Phi(|f_a(z)|)d\nu_\alpha(z)=I_1+I_2.
\end{multline*}

Using that $\Phi$ is of lower type $p$ and \cite[Proposition 1.4.10]{R}, we obtain
\Beas
I_1 &=& \int_{\frac{(1-|a|^2)}{|1-\langle z,a\rangle|}\le 1}\Phi(|f_a(z)|)d\nu_\alpha(z)\\ &\le& C\frac{2}{(1-|a|^2)^{n+1+\alpha}}\int_{\mathbb B^n}\frac{(1-|a|^2)^{(n+1+\alpha+k-\varepsilon)p}}{|1-\langle z,a\rangle|^{(n+1+\alpha+k-\varepsilon)p}}d\nu_\alpha(z)\\ &\le& C.
\Eeas

For the second integral, we use the fact that the function $\frac{\Phi^{-1}(t)}{t}$ is non-decreasing on $(0,\infty)$, and \cite[Proposition 1.4.10]{R} to obtain
\Beas
I_2 &=& \int_{\frac{(1-|a|^2)}{|1-\langle z,a\rangle|}> 1}\Phi(|f_a(z)|)d\nu_\alpha(z)\\ &\le& C\frac{2}{(1-|a|^2)^{n+1+\alpha}}\int_{\mathbb B^n}\frac{(1-|a|^2)^{n+1+\alpha+k-\varepsilon}}{|1-\langle z,a\rangle|^{n+1+\alpha+k-\varepsilon}}d\nu_\alpha(z)\\ &\le& C.
\Eeas

The following result holds.
\btheo\label{theo:casversA1}
Let $\Phi\in \mathscr{L}_p$, $\alpha>-1$. Then the Hankel operator $h_b$ extends into a bounded operator from $\mathcal A_\alpha^{\Phi}(\mathbb B^n)$ into $\mathcal A_\alpha^{1}(\mathbb B^n)$ if and only if the symbol $b$ belongs to $L\Gamma_{\alpha,\rho}(\mathbb B^n)$ with
$\rho(t)=\frac{1}{t\Phi(\frac{1}{t})}$.
\etheo
\begin{proof}
First, we suppose that the function $b\in L\Gamma_{\alpha,\rho}(\mathbb B^n).$ That is
$b\in \mathcal H(\mathbb B^n)$  and satisfies, for some $k$ large enough,  the condition
$$|R^kb(z)|\le
C(1-|z|^2)^{-k}\rho\left((1-|z|^2)^{n+1+\alpha}\right)\left(\log\frac{1}{1-|z|^2}\right)^{-1},$$
with $C>0$ an absolute constant.
We prove that $h_b$ is bounded from $\mathcal A_\alpha^{\Phi}(\mathbb B^n)$ into
$\mathcal A_\alpha^1(\mathbb B^n)$. Using Lemma \ref{integrationparts}, we have
\Beas
h_b(f)(z)=P_\alpha(b\overline f)(z) &=& \int_{\mathbb
B^n}b(w)\overline {f(w)} K_\alpha(z,w)d\nu_\alpha(w)\\
&=& C_{k,\alpha}\int_{\mathbb B^n}K_\alpha(z,w)M_k^\alpha b(w)\overline {f(w)}
(1-|w|^2)^{k}d\nu_\alpha(w).\Eeas Now, by
 \cite[Proposition 1.4.10]{R}, we have $$\int_{\mathbb
B^n}|K_\alpha(z,w)|d\nu_\alpha(z)\backsimeq
\log\frac{1}{1-|w|^2}.$$
 Combining  these facts with the inequality  (\ref{eq:embedconcave}), we obtain
 \Beas
\quad &&\int_{\mathbb B^n}|h_bf(z)|d\nu_\alpha(z) \\
&\le&
C\int_{\mathbb B^n}\int_{\mathbb
B^n}|K_\alpha(z,w)||M_k^\alpha b(w)||f(w)|(1-|w|^2)^{k}d\nu_\alpha (w)d\nu_\alpha(z)\\
&\le& C\int_{\mathbb
B^n}|M_k^\alpha b(w)|\left(\log\frac{1}{1-|w|^2}\right)(1-|w|^2)^{k}|f(w)|d\nu_\alpha (w)\\
&\le& C ||b||_{L\Gamma_{\alpha,\rho}}\int_{\mathbb
B^n}|f(w)|\rho\left((1-|z|^2)^{n+1+\alpha}\right)d\nu_\alpha (w)\\
 &\le&
C ||b||_{L\Gamma_{\alpha,\rho}}||f||^{lux}_{\alpha,\Phi} .\Eeas
This complete the first part of the proof.

Conversely, if $h_b$ is bounded from
$\mathcal A_\alpha^\Phi (\mathbb B^n)$ into $\mathcal A_\alpha^1(\mathbb
B^n)$, then
we have for every $f\in \mathcal A_\alpha^\Phi (\mathbb B^n)$ and $g\in
\mathcal B=(\mathcal A_\alpha^1(\mathbb B^n))^*$,
\begin{equation}\label{dualineq}
|\langle
h_b(f),g\rangle_\alpha|=|\langle b, f g \rangle_\alpha|\le
C||h_b||||f||^{lux}_{\alpha,\Phi}||g||_{\mathcal B}.\end{equation}

We will apply the inequality (\ref{dualineq}) to $f$ and $g$, with
$$f(z)=f_w(z)=\Phi^{-1}\left(\frac{1}{(1-|w|^2)^{n+1+\alpha}}\right)\frac{(1-|w|^2)^{n+1+\alpha+k}}{(1-\langle z,w\rangle)^{n+1+\alpha+k}}$$ and  $$g(z)=\log(1-\langle z,w
\rangle)$$ where $k$ is an integer with
$k>(n+1+\alpha)(\frac{1}{p}-1)$.

We have seen that $f$ is uniformly in $\mathcal A_\alpha^\Phi (\mathbb
B^n)$ and it is well known that $g$ is uniformly in
$\mathcal B$. It follows that

\begin{multline*}
||h_b|| \ge C\Phi^{-1}\left(\frac{1}{(1-|w|^2)^{n+1+\alpha}}\right)(1-|w|^2)^{n+1+\alpha+k}\\ \left|\lim_{r\rightarrow 1}\int_{\mathbb B^n}\frac{b(z)}{\left(1-\langle w,rz
\rangle\right)^{n+1+\alpha+k}}\log(1-\langle w,rz \rangle)d\nu_\alpha(z) \right|\\ =C\frac{(1-|w|^2)^k}{\rho\left((1-|w|^2)^{n+1+\alpha}\right)}\left|\lim_{r\rightarrow 1}\int_{\mathbb B^n}\frac{b(z)}{\left(1-\langle w,rz
\rangle\right)^{n+1+\alpha+k}}\log(1-\langle
w,rz \rangle)d\nu_\alpha(z) \right|.
\end{multline*}

This  is equivalent to
\begin{multline*}
\left|\lim_{r\rightarrow 1}\int_{\mathbb B^n}\frac{(1-|w|^2)^k}{\rho\left((1-|w|)^{n+1+\alpha}\right)}\frac{b(z)}{\left(1-\langle w,rz
\rangle\right)^{n+1+\alpha+k}}\log(1-|w|^2)d\nu_\alpha(z)+\right. \\
\left.
\lim_{r\rightarrow 1}\int_{\mathbb B^n}b(z)\overline {h(rz)}d\nu_\alpha(z)
\right|\le C ||h_b||
\end{multline*}
where
$$h(z)=\Phi^{-1}\left(\frac{1}{(1-|w|^2)^{n+1+\alpha}}\right)\frac{(1-|w|^2)^{n+1+\alpha+k}}{(1-\langle z,w\rangle)^{n+1+\alpha+k}}\log\left(\frac{1-\langle z,w
\rangle}{1-|w|^2}\right).$$  We have seen at the beginning of this section that  $h$ is
uniformly in $\mathcal A_\alpha^\Phi(\mathbb B^n)$. That is
 $$||h||^{lux}_{\alpha,\Phi}\leq C.$$ It follows using (\ref{dualineq}) with $f=h$ and $g=1$ that
 $$\left|\lim_{r\rightarrow 1}\int_{\mathbb
B^n}b(z)\overline {h(rz)}d\nu_\alpha(z)\right|\le C||h_b||.$$
We deduce that
$$\left|\lim_{r\rightarrow 1}\int_{\mathbb B^n}\frac{(1-|w|^2)^k}{\rho\left((1-|w|^2)^{n+1+\alpha}\right)}\frac{b(z)}{\left(1-\langle w,rz
\rangle\right)^{n+1+\alpha+k}}\log(1-|w|^2)d\nu_\alpha(z)\right|\le C||h_b||$$ or equivalently
\Beas \left|\lim_{r\rightarrow 1}\int_{\mathbb B^n}\frac{b(z)}{\left(1-\langle w,rz
\rangle\right)^{n+1+\alpha+k}}d\nu_\alpha(z) \right| &\le&
C||h_b||(1-|w|^2)^{-k}\\ & & \rho\left((1-|w|^2)^{n+1+\alpha}\right)\left(\log\frac{1}{1-|w|^2}\right)^{-1}.
\Eeas
That is, for $w\in
\mathbb B^n$
 $$|M_{k}^\alpha b(w)|\le
C||h_b||(1-|w|^2)^{-k}\rho\left((1-|w|^2)^{n+1+\alpha}\right)\left(\log\frac{1}{1-|w|^2}\right)^{-1}.$$ The proof is complete.
\end{proof}

\subsection{Boundedness of $h_b$: $\mathcal A_\alpha^{\Phi_1}(\mathbb
 B^n)\rightarrow \mathcal A_\alpha^{\Phi_2}(\mathbb
 B^n)$;  $(\Phi_1,\Phi_2)\in \mathscr{L}_p\times \mathscr{U}^q $}
 The following result extends the classical case $h_b:\mathcal A_\alpha^{p}(\mathbb
 B^n)\rightarrow \mathcal A_\alpha^{q}(\mathbb
 B^n)$, $0<p<1$ and $q>1$.
 \begin{theorem}\label{theo:concavetoconvex}
Let $\Phi_1\in \mathscr{L}_p$ and $\Phi_2\in \mathscr{U}^q$, $\rho_i(t)=\frac{1}{t\Phi_i^{-1}(1/t)}$ and assume that $\Phi_2$ satisfies the Dini condition (\ref{dinicondition}). Then   the Hankel operator $h_b$ extends into a bounded operator from $\mathcal A_\alpha^{\Phi_1}(\mathbb
 B^n)$ into $\mathcal A_\alpha^{\Phi_2}(\mathbb
 B^n)$ if and only if its symbol $b$ belongs to  $ \Gamma_{\alpha,\rho}(\mathbb B^n)= (\mathcal A^{\Phi}(\mathbb
 B^n))^*,$ where
\begin{equation*}
\rho=\rho_{\Phi}:=\frac{\rho_1}{\rho_2}.
\end{equation*}
\end{theorem}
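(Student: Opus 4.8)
The plan is to reduce the whole statement to the duality Theorem \ref{lem:dualitysmall} via the product (weak factorization) mechanism. Let $\Psi_2$ be the complementary function of $\Phi_2$ and let $\Phi$ be the growth function determined by $\Phi^{-1}(t)=\Phi_1^{-1}(t)\Psi_2^{-1}(t)$. By Lemma \ref{stabilityoflowertypeclass} we have $\Phi\in\mathscr{L}_r$ for some $r\le p$, so Theorem \ref{lem:dualitysmall} applies and identifies $(\mathcal A_\alpha^{\Phi}(\mathbb B^n))^*$ with $\Gamma_{\alpha,\rho_\Phi}(\mathbb B^n)$, where $\rho_\Phi(t)=\tfrac{1}{t\Phi^{-1}(1/t)}$. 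Lemma \ref{conditionforlowertype1} gives $\rho_\Phi\simeq \rho_1/\rho_2=\rho$, hence $\Gamma_{\alpha,\rho}(\mathbb B^n)=(\mathcal A_\alpha^{\Phi}(\mathbb B^n))^*$, which is precisely the identification in the statement. I will also use throughout the elementary identity $\langle h_b(f),g\rangle_\alpha=\langle b,fg\rangle_\alpha$, valid for $f,g$ bounded holomorphic (from the self-adjointness of $P_\alpha$ together with $P_\alpha g=g$), and then extend by density.

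For the sufficiency, assume $b\in\Gamma_{\alpha,\rho}(\mathbb B^n)$. Since $\Phi_2$ satisfies the Dini condition it satisfies the $\bigtriangledown_2$-condition, so $\Phi_2$ and $\Psi_2$ are both in $\Delta_2$, the spaces are reflexive, and $\mathcal A_\alpha^{\Phi_2}(\mathbb B^n)=(\mathcal A_\alpha^{\Psi_2}(\mathbb B^n))^*$ under the integral pairing. It therefore suffices to bound $|\langle h_b(f),g\rangle_\alpha|$ for $f\in\mathcal A_\alpha^{\Phi_1}(\mathbb B^n)$ and $g\in\mathcal A_\alpha^{\Psi_2}(\mathbb B^n)$. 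By Proposition \ref{prop:volbergtolokonnikov} the product $fg$ lies in $\mathcal A_\alpha^{\Phi}(\mathbb B^n)$ with $\|fg\|^{lux}_{\alpha,\Phi}\le c\,\|f\|^{lux}_{\alpha,\Phi_1}\|g\|^{lux}_{\alpha,\Psi_2}$, so
\[
|\langle h_b(f),g\rangle_\alpha|=|\langle b,fg\rangle_\alpha|\le \|b\|_{\Gamma_{\alpha,\rho}}\,\|fg\|^{lux}_{\alpha,\Phi}\le C\|b\|_{\Gamma_{\alpha,\rho}}\,\|f\|^{lux}_{\alpha,\Phi_1}\|g\|^{lux}_{\alpha,\Psi_2}.
\]
Taking the supremum over $g$ in the unit ball of $\mathcal A_\alpha^{\Psi_2}(\mathbb B^n)$ and using the reflexive duality gives $\|h_b(f)\|^{lux}_{\alpha,\Phi_2}\le C\|b\|_{\Gamma_{\alpha,\rho}}\|f\|^{lux}_{\alpha,\Phi_1}$, i.e. $h_b$ is bounded.

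For the necessity, assume $h_b$ is bounded and reproduce the extremal-function argument of Theorem \ref{lem:dualitysmall}, splitting its test function as a product. Write $s_a=\tfrac{1}{(1-|a|^2)^{n+1+\alpha}}$, fix an integer $k>(n+1+\alpha)(\tfrac1r-1)$, and for $t_1+t_2=n+1+\alpha+k$ with $t_1,t_2$ large enough set
\[
f_a(z)=\Phi_1^{-1}(s_a)\,\frac{(1-|a|^2)^{t_1}}{(1-\langle z,a\rangle)^{t_1}},\qquad g_a(z)=\Psi_2^{-1}(s_a)\,\frac{(1-|a|^2)^{t_2}}{(1-\langle z,a\rangle)^{t_2}}.
\]
Lemma \ref{bounded-concave} (applied to $\Phi_1\in\mathscr{L}_p$) together with the Forelli--Rudin estimates gives $\|f_a\|^{lux}_{\alpha,\Phi_1}\le C$, while Lemma \ref{bounded-convex} (applied to $\Psi_2$, which is convex since $\Phi_2\in\bigtriangledown_2$) gives $\|g_a\|^{lux}_{\alpha,\Psi_2}\le C$, uniformly in $a$. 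Because $\Phi_1^{-1}\Psi_2^{-1}=\Phi^{-1}$ and $t_1+t_2=n+1+\alpha+k$, the product $f_ag_a$ equals the extremal function of Theorem \ref{lem:dualitysmall} at $a$, so Lemma \ref{integrationparts} yields
\[
\frac{(1-|a|^2)^k}{\rho\!\left((1-|a|^2)^{n+1+\alpha}\right)}\,|M_k^\alpha b(a)|\;\simeq\;|\langle b,f_ag_a\rangle_\alpha|=|\langle h_b(f_a),g_a\rangle_\alpha|\le \|h_b\|\,\|f_a\|^{lux}_{\alpha,\Phi_1}\|g_a\|^{lux}_{\alpha,\Psi_2}\le C\|h_b\|.
\]
This is exactly the pointwise estimate defining $\Gamma_{\alpha,\rho}(\mathbb B^n)$, whence $b\in\Gamma_{\alpha,\rho}(\mathbb B^n)$.

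The main obstacle is the necessity step: one must choose $t_1,t_2$ and verify, uniformly in $a$, the membership of $f_a$ in $\mathcal A_\alpha^{\Phi_1}$ and of $g_a$ in $\mathcal A_\alpha^{\Psi_2}$, a computation that combines the concave estimate (Lemma \ref{bounded-concave}) with the convex one (Lemma \ref{bounded-convex}) and relies on $\Phi_2^{-1}(s)\Psi_2^{-1}(s)\simeq s$ to recombine the two normalizing constants into $\Phi^{-1}$ and to recover the weight $\rho=\rho_1/\rho_2$. The sufficiency is essentially formal once Proposition \ref{prop:volbergtolokonnikov}, the reflexivity of $\mathcal A_\alpha^{\Phi_2}$, and the identification $\Gamma_{\alpha,\rho}=(\mathcal A_\alpha^{\Phi})^*$ are available.
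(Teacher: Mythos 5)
Your proposal follows essentially the same route as the paper's own proof: sufficiency via Proposition \ref{prop:volbergtolokonnikov} together with the identification $\Gamma_{\alpha,\rho}=(\mathcal A_\alpha^{\Phi})^*$ coming from Lemmas \ref{stabilityoflowertypeclass} and \ref{conditionforlowertype1} and Theorem \ref{lem:dualitysmall}, and necessity by testing against the pair $f_a,g_a$ normalized by $\Phi_1^{-1}$ and $\Psi_2^{-1}$ so that their product is the extremal function of the duality theorem (the paper takes the specific split $t_1=k-1$, $t_2=n+2+\alpha$). The argument is correct and matches the paper's proof in all essential respects.
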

\begin{proof}
We start by proving the sufficiency of the condition for the
boundedness. Let
$\Phi_j$, $j=1,2$ be as in the hypothesis and $\rho=\rho_{\Phi}:=\frac{\rho_1}{\rho_2}$. Denoting by $\Psi_2$ the complementary function of $\Phi_2$, then as $\Phi^{-1}(t)=\Phi_1^{-1}(t)\Psi_2^{-1}(t)$, Proposition \ref{prop:volbergtolokonnikov}
gives that $fg\in \mathcal A_\alpha^{\Phi}(\mathbb
 B^n)$ for any $f\in \mathcal A_\alpha^{\Phi_1}(\mathbb
 B^n)$ and $g\in \mathcal A_\alpha^{\Psi_2}(\mathbb
 B^n)$. Moreover, the dual space of $\mathcal A_\alpha^{\Phi}(\mathbb
 B^n)$ coincides with $\Gamma_{\alpha, \rho}$ since $\Phi\in \mathscr{L}_r$ for some $0<r\le p$ (see Lemma \ref{stabilityoflowertypeclass}). It
follows that there exists a positive constant $C$
such that
$$|\langle  h_b(f),g\rangle_\alpha|=|\langle b,f g
\rangle_\alpha|\le C||b||_{\Gamma_{\alpha,\rho}}||fg||^{lux}_{\alpha,\Phi}\le  C||b||_{\Gamma_{\alpha,\rho}}||f||^{lux}_{\alpha,\Phi_1}||g||^{lux}_{\alpha,\Psi_2}.$$ We conclude that if $b\in
\Gamma_{\alpha,\rho}(\mathbb B^n)$, then $h_b$ is bounded
from $\mathcal A_\alpha^{\Phi_1}(\mathbb B^n)$ into $\mathcal A_\alpha^{\Phi_2}(\mathbb
B^n)$ with $||h_b||\leq C||b||_{\Gamma_{\alpha.\rho}}.$

Conversely, suppose that $h_b$ extends into a bounded
operator from $\mathcal A_\alpha^{\Phi_1}(\mathbb B^n)$ into $\mathcal A_\alpha^{\Phi_2}(\mathbb B^n)$. Then as in (\ref{dualineq}), we have
\begin{equation}\label{dualityhankel}
|\langle h_bf,g\rangle_\alpha|=\left|\lim_{r\rightarrow 1}\int_{\mathbb B^n}b(z)\overline
{f(rz)g(rz)}d\nu_\alpha(z)\right|\le
C||h_b||||f||^{lux}_{\alpha,\Phi_1}||g||^{lux}_{\alpha,\Psi_2}.
\end{equation}
 Let $w\in \mathbb B^n,$ we apply the above inequality to
$$f(z)=f_w(z)=\Phi_1^{-1}\left(\frac{1}{\left(1-|w|^2\right)^{n+1+\alpha}}\right)\frac{(1-|w|^2)^{k-1}}{(1-\langle z,w \rangle)^{k-1}},$$ and
$$g(z)=\Psi_2^{-1}\left(\frac{1}{\left(1-|w|^2\right)^{n+1+\alpha}}\right)\frac{(1-|w|^2)^{n+2+\alpha}}{(1-\langle z,w \rangle)^{n+2+\alpha}},$$
 with $k>\frac{n+1+\alpha}{p}+1$. Using Lemma \ref{bounded-concave} and Lemma \ref{bounded-convex}, one easily verifies that $f$ and $g$ are  uniformly in $\mathcal A_\alpha^{\Phi_1}(\mathbb B^n)$ and
$\mathcal A_\alpha^{\Psi_2}(\mathbb B^n)$ respectively. Hence
\begin{multline*}
 |\langle h_b(f),g\rangle_\alpha| \\
\simeq \Phi^{-1}\left(\frac{1}{\left(1-|w|^2\right)^{n+1+\alpha}}\right)\left(1-|w|^2\right)^{n+1+\alpha+k}\left|\lim_{r\rightarrow 1}\int_{\mathbb B^n}\frac{b(z)}{(1-\langle w,rz \rangle)^{n+1+\alpha+k}}d\nu_\alpha(z)\right|\\ \le C||h_b||.
\end{multline*}
That is, for all $w\in
\mathbb B^n,$  $$|M_{k}b(w)|\le
C||h_b||(1-|w|^2)^{-k}\rho\left(\left(1-|w|^2\right)^{n+1+\alpha}\right),\,\,$$
Thus $||b||_{\Gamma_{\alpha,\rho}(\mathbb B^n)}\leq C ||h_b||.$ This completes the proof of the theorem.
\end{proof}

\subsection{Boundedness of $h_b$: $\mathcal A_\alpha^{\Phi_1}(\mathbb
 B^n)\rightarrow \mathcal A_\alpha^{\Phi_2}(\mathbb
 B^n)$;  $(\Phi_1,\Phi_2)\in \mathscr{U}^q\times \mathscr{U}^q $}

\begin{theorem}\label{theo:convextoconvex}
Let $\Phi_1$ and $\Phi_2$ in $ \mathscr{U}^q$, and $\rho_i(t)=\frac{1}{t\Phi_i^{-1}(1/t)}$. Denote by $\Psi_2$ the complementary function of $\Phi_2$.
We suppose that:
\begin{itemize}
\item[(i)] $\Phi_2$ satisfies the Dini condition (\ref{dinicondition})
\item[(ii)]  $\frac{\Phi_1^{-1}(t)\Psi_2^{-1}(t)}{t}$ is non-decreasing.
\end{itemize}
 Then   the Hankel operator $h_b$ extends into a bounded operator from $\mathcal A_\alpha^{\Phi_1}(\mathbb
 B^n)$ into $\mathcal A_\alpha^{\Phi_2}(\mathbb
 B^n)$ if and only if its symbol $b$ belongs to  $ \Gamma_{\alpha,\rho_\Phi},$ where
\begin{equation*}
\rho=\rho_{\Phi}:=\frac{\rho_1}{\rho_2}.
\end{equation*}
\end{theorem}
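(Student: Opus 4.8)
The plan is to follow the architecture of the proof of Theorem \ref{theo:concavetoconvex}, the essential new input being that one must first certify that the product growth function $\Phi$ defined by $\Phi^{-1}(t)=\Phi_1^{-1}(t)\Psi_2^{-1}(t)$ is again of lower type. First I would observe that, by Lemma \ref{conditionforlowertype1}, the weight $\rho_\Phi=\rho_1/\rho_2$ is associated with $\Phi$ in the sense that $\Phi^{-1}(t)\simeq\Phi_1^{-1}(t)\Psi_2^{-1}(t)$. Next, using $\Phi_2^{-1}(t)\Psi_2^{-1}(t)\simeq t$, the hypothesis (ii) that $\frac{\Phi_1^{-1}(t)\Psi_2^{-1}(t)}{t}$ is non-decreasing is seen to be equivalent to saying that $\frac{\Phi_1^{-1}(t)}{\Phi_2^{-1}(t)}$ is non-decreasing, which after the substitution $t=\Phi_1(s)$ is exactly the requirement that $\frac{\Phi_2^{-1}\circ\Phi_1(s)}{s}$ be non-increasing. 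Together with the Dini condition (i) on $\Phi_2$, Lemma \ref{conditionforlowertype} then yields $\Phi\in\mathscr{L}_r$ for some $r>0$, and Theorem \ref{lem:dualitysmall} identifies $\left(\mathcal A_\alpha^\Phi(\mathbb B^n)\right)^*=\Gamma_{\alpha,\rho_\Phi}(\mathbb B^n)$.

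For the sufficiency, assuming $b\in\Gamma_{\alpha,\rho_\Phi}$, I would argue exactly as in Theorem \ref{theo:concavetoconvex}: since $\Phi^{-1}(t)=\Phi_1^{-1}(t)\Psi_2^{-1}(t)$, Proposition \ref{prop:volbergtolokonnikov} gives $fg\in\mathcal A_\alpha^\Phi(\mathbb B^n)$ with $\|fg\|_{\alpha,\Phi}^{lux}\lesssim\|f\|_{\alpha,\Phi_1}^{lux}\|g\|_{\alpha,\Psi_2}^{lux}$ for all $f\in\mathcal A_\alpha^{\Phi_1}$ and $g\in\mathcal A_\alpha^{\Psi_2}$. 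Testing the dual pairing,
$$|\langle h_b(f),g\rangle_\alpha|=|\langle b,fg\rangle_\alpha|\le C\|b\|_{\Gamma_{\alpha,\rho_\Phi}}\|f\|_{\alpha,\Phi_1}^{lux}\|g\|_{\alpha,\Psi_2}^{lux}.$$
Because $\Phi_2$ satisfies the Dini condition, it satisfies the $\bigtriangledown_2$-condition, so the dual of $\mathcal A_\alpha^{\Phi_2}(\mathbb B^n)$ is $\mathcal A_\alpha^{\Psi_2}(\mathbb B^n)$; taking the supremum over $g$ in the unit ball of $\mathcal A_\alpha^{\Psi_2}$ forces $h_b(f)\in\mathcal A_\alpha^{\Phi_2}(\mathbb B^n)$ with $\|h_b(f)\|_{\alpha,\Phi_2}^{lux}\le C\|b\|_{\Gamma_{\alpha,\rho_\Phi}}\|f\|_{\alpha,\Phi_1}^{lux}$.

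For the necessity, I would reuse the test-function scheme of Theorem \ref{theo:concavetoconvex} almost verbatim, the only change being that now both symbols are measured with the convex estimate of Lemma \ref{bounded-convex}. Fixing $w\in\mathbb B^n$ and $k>\frac{n+1+\alpha}{p}+1$, I set $f_w(z)=\Phi_1^{-1}\left(\frac{1}{(1-|w|^2)^{n+1+\alpha}}\right)\frac{(1-|w|^2)^{k-1}}{(1-\langle z,w\rangle)^{k-1}}$ and $g_w(z)=\Psi_2^{-1}\left(\frac{1}{(1-|w|^2)^{n+1+\alpha}}\right)\frac{(1-|w|^2)^{n+2+\alpha}}{(1-\langle z,w\rangle)^{n+2+\alpha}}$; Lemma \ref{bounded-convex} together with the Forelli--Rudin estimates \cite[Proposition 1.4.10]{R} shows that these are uniformly in $\mathcal A_\alpha^{\Phi_1}$ and $\mathcal A_\alpha^{\Psi_2}$ respectively. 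Plugging into the boundedness inequality, using $\Phi_1^{-1}(t)\Psi_2^{-1}(t)\simeq\Phi^{-1}(t)$ and $\Phi^{-1}\left(\frac{1}{(1-|w|^2)^{n+1+\alpha}}\right)(1-|w|^2)^{n+1+\alpha}=\frac{1}{\rho_\Phi\left((1-|w|^2)^{n+1+\alpha}\right)}$, and recognizing the remaining integral as a constant multiple of $M_k^\alpha b(w)$, I obtain $\frac{(1-|w|^2)^k}{\rho_\Phi\left((1-|w|^2)^{n+1+\alpha}\right)}|M_k^\alpha b(w)|\le C\|h_b\|$, that is $b\in\Gamma_{\alpha,\rho_\Phi}(\mathbb B^n)$.

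The main obstacle is the preliminary step of confirming $\Phi\in\mathscr{L}_r$: unlike in Theorem \ref{theo:concavetoconvex}, where the concavity of $\Phi_1$ made Lemma \ref{stabilityoflowertypeclass} immediately applicable, here both factors are convex and the lower type of the product can fail without extra hypotheses. This is precisely why the Dini condition (i) and the monotonicity hypothesis (ii) are imposed, and the crux of the argument is to match (ii) against the non-increasing condition in Lemma \ref{conditionforlowertype}. Once $\Phi\in\mathscr{L}_r$ is secured, the duality identification and both implications proceed along the already-established lines.
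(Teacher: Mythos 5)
Your proposal is correct and takes essentially the same route as the paper, whose own proof is only a three-line reduction: condition (i) yields $(\mathcal A_\alpha^{\Phi_2}(\mathbb B^n))^*=\mathcal A_\alpha^{\Psi_2}(\mathbb B^n)$, condition (ii) yields $\Phi\in\mathscr{L}_p$, and everything else repeats the argument of Theorem \ref{theo:concavetoconvex} with the test functions and Lemma \ref{bounded-convex} exactly as you describe. The only (harmless) divergence is that you certify the lower type of $\Phi$ by converting (ii) into the hypothesis of Lemma \ref{conditionforlowertype} (an equivalence that holds up to multiplicative constants, which suffices here), whereas the paper invokes (ii) directly and reserves Lemma \ref{conditionforlowertype} for the variant stated as Proposition \ref{theo:convextoconvex1}.
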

\begin{proof}
Condition $(i)$ implies that the dual space of $\mathcal A_\alpha^{\Phi_2}(\mathbb B^n)$ is $\mathcal A_\alpha^{\Psi_2}(\mathbb
 B^n)$. Condition $(ii)$ implies that $\Phi\in \mathscr{L}_p$ for some $0<p\le 1$. The whole proof follows the lines of the proof of Theorem \ref{theo:concavetoconvex}.
\end{proof}

We observe that in the proof of the above theorem, the condition $(ii)$ is used to ensure that the resulting growth function $\Phi$ is in some $\mathscr{L}_p$. Hence using Lemma \ref{conditionforlowertype}, we have the following.

\begin{proposition}\label{theo:convextoconvex1}
Let $\Phi_1$ and $\Phi_2$ in $ \mathscr{U}^q$, and $\rho_i(t)=\frac{1}{t\Phi_i^{-1}(1/t)}$.
We suppose that
\begin{itemize}
\item[(i)] $\Phi_2$ satisfies the Dini condition (\ref{dinicondition})
\item[(ii)]  $\frac{\Phi_2^{-1}\circ\Phi_1(t)}{t}\quad\textrm{is non-increasing}$.
\end{itemize}
  Then   the Hankel operator $h_b$ extends into a bounded operator from $\mathcal A_\alpha^{\Phi_1}(\mathbb
 B^n)$ into $\mathcal A_\alpha^{\Phi_2}(\mathbb
 B^n)$ if and only if its symbol $b$ belongs to  $ \Gamma_{\alpha,\rho_\Phi}= (\mathcal A_\alpha^{\Phi}(\mathbb
 B^n))^*,$ where
\begin{equation*}
\rho_{\Phi}:=\frac{\rho_1}{\rho_2}.
\end{equation*}
\end{proposition}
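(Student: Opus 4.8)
The plan is to recognize this proposition as a corollary of Theorem~\ref{theo:convextoconvex}, differing from it only in the hypothesis used to force the auxiliary growth function $\Phi$ (defined by $\Phi^{-1}(t)=\Phi_1^{-1}(t)\Psi_2^{-1}(t)$) to lie in some class $\mathscr{L}_p$. An inspection of the proof of Theorem~\ref{theo:convextoconvex} shows that its condition~$(ii)$, that $t\mapsto \Phi_1^{-1}(t)\Psi_2^{-1}(t)/t$ be non-decreasing, enters \emph{only} through the conclusion $\Phi\in\mathscr{L}_p$ for some $0<p\le 1$; every subsequent step uses $\Phi$ solely through this membership. So the whole task reduces to re-deriving $\Phi\in\mathscr{L}_p$ from the present hypotheses, after which nothing new needs to be proved.

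First I would invoke Lemma~\ref{conditionforlowertype} directly. Its hypotheses are exactly those assumed here: $\Phi_1,\Phi_2\in\mathscr{U}^q$, the Dini condition~(\ref{dinicondition}) on $\Phi_2$, and the monotonicity assumption that $t\mapsto \Phi_2^{-1}\circ\Phi_1(t)/t$ be non-increasing. The lemma then yields $\Phi\in\mathscr{L}_p$ for some $p>0$, hence $0<p\le 1$ by the convention defining the class $\mathscr{L}_p$.

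Next I would record that, with $\Phi\in\mathscr{L}_p$ in hand, Theorem~\ref{lem:dualitysmall} identifies $(\mathcal A_\alpha^{\Phi}(\mathbb B^n))^*$ with $\Gamma_{\alpha,\rho}(\mathbb B^n)$ for $\rho(t)=1/(t\Phi^{-1}(1/t))$, while Lemma~\ref{conditionforlowertype1} guarantees that this $\rho$ is equivalent to $\rho_\Phi=\rho_1/\rho_2$, so that the target Lipschitz space is precisely $\Gamma_{\alpha,\rho_\Phi}(\mathbb B^n)$ as claimed. From here the two implications are copied from Theorem~\ref{theo:convextoconvex} (itself following Theorem~\ref{theo:concavetoconvex}). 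For sufficiency, since $\Phi^{-1}=\Phi_1^{-1}\Psi_2^{-1}$, Proposition~\ref{prop:volbergtolokonnikov} sends $(f,g)\mapsto fg$ boundedly from $\mathcal A_\alpha^{\Phi_1}\times\mathcal A_\alpha^{\Psi_2}$ into $\mathcal A_\alpha^{\Phi}$, while the Dini condition on $\Phi_2$ identifies $(\mathcal A_\alpha^{\Phi_2})^*$ with $\mathcal A_\alpha^{\Psi_2}$; the pairing $|\langle h_b f,g\rangle_\alpha|=|\langle b,fg\rangle_\alpha|\le C\|b\|_{\Gamma_{\alpha,\rho_\Phi}}\|f\|^{lux}_{\alpha,\Phi_1}\|g\|^{lux}_{\alpha,\Psi_2}$ then gives boundedness. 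For necessity, I would test the duality inequality on the same extremal functions $f_w,g_w$ built from $\Phi_1^{-1}$ and $\Psi_2^{-1}$ that appear in Theorem~\ref{theo:concavetoconvex}, shown to be uniformly normalized by Lemmas~\ref{bounded-concave} and~\ref{bounded-convex}, so as to recover the pointwise bound on $M_k^\alpha b$ characterizing membership in $\Gamma_{\alpha,\rho_\Phi}$.

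Since all the analytic labor is already packaged in Lemma~\ref{conditionforlowertype} and in Theorem~\ref{theo:convextoconvex}, there is no genuine obstacle in the argument itself; the only point deserving care is confirming that condition~$(ii)$ here is truly interchangeable with condition~$(ii)$ of Theorem~\ref{theo:convextoconvex} — that is, that ``$\Phi_2^{-1}\circ\Phi_1(t)/t$ non-increasing'' is the correct surrogate for ``$\Phi_1^{-1}(t)\Psi_2^{-1}(t)/t$ non-decreasing'' as a means of securing $\Phi\in\mathscr{L}_p$ — and this interchangeability is exactly what Lemma~\ref{conditionforlowertype} certifies.
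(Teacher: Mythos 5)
Your proposal is correct and follows exactly the route the paper takes: the paper's entire justification is the remark that condition $(ii)$ of Theorem \ref{theo:convextoconvex} enters only to secure $\Phi\in\mathscr{L}_p$, and that Lemma \ref{conditionforlowertype} supplies this membership under the present hypotheses, after which the proof of Theorem \ref{theo:convextoconvex} (itself modeled on Theorem \ref{theo:concavetoconvex}) is repeated verbatim. Your additional spelling-out of the sufficiency and necessity steps matches what the paper leaves implicit.
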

\emph{Acknowledgements}: The second author would like to acknowledge the support of the International Centre for Theoretical Physics (ICTP), Trieste (Italy).


\bibliographystyle{plain}

\end{document}